\tikzset{%
	highlight/.style={rectangle,blend mode = multiply,draw=blue!90!black,thick,rounded corners = 0.3 mm,inner sep=0.5pt}
}
\def\O#1{\text{\ding{\the\numexpr#1+171}}}
\newcommand{\diff}{\mathrm{d}}
\newcommand{\E}{\mathbb{E}}
\newcommand{\EE}{\mathbb{E}}
\newcommand{\R}{\mathbb{R}}
\newcommand{\N}{\mathbb{N}}
\DeclareMathAlphabet{\mathpzc}{OT1}{pzc}{m}{it}
\newcommand{\mc}{\mathcal}
\newcommand{\opt}{^\star}
\renewcommand{\ge}{\geqslant}
\renewcommand{\geq}{\geqslant}
\renewcommand{\leq}{\leqslant}
\renewcommand{\epsilon}{\varepsilon}
\newcommand{\suchthat}{\mathrm{s.t.}}
\newcommand{\kibitz}[2]{\ifnum\Comments=1{\textcolor{#1}{\textsf{\footnotesize #2}}}\fi}
\newtheorem{theorem}{Theorem}[section]
\newtheorem{lemma}[theorem]{Lemma}
\newtheorem{proposition}[theorem]{Proposition}
\newtheorem{assumption}[theorem]{Assumption}
\newtheorem{corollary}[theorem]{Corollary}
\theoremstyle{definition}
\newtheorem{definition}[theorem]{Definition}
\theoremstyle{remark}
\newtheorem{rmk}[theorem]{Remark}
\newcommand{\email}[1]{\href{mailto:#1}{\texttt{#1}}}
\title{Unifying Distributionally Robust Optimization\\  via Optimal Transport Theory}
\author{Jose Blanchet\thanks{Management Science and Engineering, Stanford University
  (\email{jose.blanchet@stanford.edu}).}
\and Daniel Kuhn\thanks{Risk Analytics and Optimization Chair, EPFL
  (\email{daniel.kuhn@epfl.ch}).}
\and Jiajin Li\thanks{Sauder School of Business, University of British Columbia
  (\email{jiajin.li@sauder.ubc.ca}).}
  \and Bahar Ta{\c s}kesen\thanks{Booth School of Business, University of Chicago
  (\email{bahar.taskesen@ chicagobooth.edu}).}}
\begin{document}
	\maketitle
\begin{abstract}
In recent years, two prominent paradigms have shaped distributionally robust optimization (DRO), modeling distributional ambiguity through $\phi$-divergences and Wasserstein distances, respectively. While the former focuses on ambiguity in likelihood ratios, the latter emphasizes ambiguity in outcomes and uses a transportation cost function to capture geometric structure in the outcome space. This paper proposes a unified framework that bridges these approaches by leveraging optimal transport (OT) with conditional moment constraints. Our formulation enables adversarial distributions to jointly perturb likelihood ratios and outcomes, yielding a generalized OT coupling between the nominal and perturbed distributions. We further establish key duality results and develop tractable reformulations that highlight the practical power of our unified approach.
\end{abstract}
\section{Introduction}
DRO has emerged as a powerful paradigm for decision-making under uncertainty, offering a principled defense against the adverse effects of distributional ambiguity \citep{blanchet2021statistical, chen2020distributionally, kuhn2019wasserstein, Kuhn_Shafiee_Wiesemann_2025, rahimian2019distributionally}. There are two popular ways to quantify distributional misspecification, each naturally giving rise to a distinct class of ambiguity sets. The first models corruption in the likelihoods of a baseline distribution (often the empirical distribution), capturing misspecification through likelihood ratios. This perspective leads to $\phi$-divergence-based uncertainty sets; see, for example, \citep{ref:bayraksan2015data, ben2009robust, bertsimas2018data, bertsimas2004price, breuer2016measuring, duchi2021learning, el2005robust, hansen2001robust, hu2013kullback, iyengar2005robust, jiang2018risk, lam2016robust, lam2019recovering, lim2006model, namkoong2016stochastic, namkoong2017variance, ref:van2021data, wang2016likelihood}. The second models potential corruptions directly in the outcomes themselves, leading to ambiguity sets defined via some Wasserstein distance, which quantifies misspecification in the space of realizations; see, e.g., \citep{blanchet2019robust, ref:blanchet2019quantifying, chen2022data, gao2017wasserstein, ref:gao2022distributionally, lee2018minimax, mohajerin2018data, pflug2007ambiguity, scarf1958min, shafieezadeh2019regularization, sinha2018certifying, xie2021distributionally}.

Traditionally, these $\phi$-divergence and Wasserstein distance approaches have been treated as distinct mechanisms, each tailored to a different source of distributional misspecification. In this paper, we show that they can be seamlessly unified within a generalized OT DRO framework on a lifted space with conditional moment constraints. This perspective enables OT methods to capture distributional uncertainty in both likelihoods and actual outcomes under a single, coherent model.

There are several advantages to our proposed unification under the OT umbrella. First, a substantial body of computational research has emerged in both the OT-based DRO community \citep{blanchet2021optimal, jiajin2021efficient, li2020fast, li2019first, sinha2018certifying, wang2021sinkhorn, yu2022fast} as well as in the broader OT community \citep{arjovsky2017wasserstein, cuturi2013sinkhorn, genevay2016stochastic, gulrajani2017improved, peyre2019computational, seguy2018large, ref:tacskesen2023semi}. Our framework has the potential to foster cross-fertilization and generate synergies between the techniques developed in these areas. Second, our approach inherits the natural strengths of the OT framework, which provides an optimal coupling between the nominal and adversarial distributions. This enhances interpretability and offers valuable modeling insight into how scenarios may shift to amplify their influence on risk relative to a performance criterion of interest. Finally, the OT-DRO literature offers a rich theoretical foundation that (we expect) can be readily adapted to our setting. This includes work on selecting optimal ambiguity radii \citep{blanchet2019robust, blanchet2021statistical, blanchet2022confidence, blanchet2019optimal}, analyzing statistical properties of OT-DRO problems \citep{an2021generalization, aolaritei2022performance, azizian2023exact, blanchet2023statistical, gao2022finite, he2021higher}, characterizing least favorable distributions and Nash equilibria \citep{ref:gao2022distributionally, mohajerin2018data, shafieezadeh2023new, yue2021linear, zhang2022wasserstein}, and exploring connections with regularization techniques \citep{blanchet2019robust, chen2018robust, cranko2021generalised, gao2017wasserstein, shafieezadeh2019regularization, shafieezadeh2015distributionally}, among others. We therefore expect that this work to stimulate new contributions across these areas from a unified perspective.

The paper is structured as follows. Section~\ref{sec:pre} introduces the problem setup and presents a new OT discrepancy with conditional moment constraints that gives rise to a lifted DRO formulation. Section~\ref{sec:mot-express} shows that existing DRO models with $\phi$-divergence, Wasserstein, and Sinkhorn ambiguity sets can all be recast as instances of this lifted model. Section~\ref{sec:duality} develops a duality theorem for the lifted DRO framework, addressing the challenges posed by the conditional moment constraints. Section~\ref{sec:cost_function} introduces a joint $\phi$-divergence and OT discrepancy that seamlessly interpolates between pure Wasserstein distances and pure $\phi$-divergences. This section additionally demonstrates that the corresponding DRO models also fall within the lifted formulation and develops tractable reformulations. Section~\ref{sec:visu} presents numerical results showing that the proposed approach can outperform classical DRO methods.

\textbf{Notation.}
The set of all integers up to~$n \in \mathbb N$ is denote by $[n] = \{1, \ldots, n\}$, and the sets of real numbers, nonnegative real numbers and extended real numbers are denoted by~$\R$, $\R_+$ and~$\overline{\R}$, respectively. We adopt the conventions of extended arithmetic, whereby $\infty \cdot 0=0 \cdot \infty=0 / 0=0$ and $\infty-\infty=-\infty+\infty=1 / 0=\infty$. Capital letters such as $Z$, $V$ or~$W$ stand for random variables, and calligraphic letters such as $\mc Z$, $\mc V$ or~$\mc W$ denote sets. The indicator function $\mathbf{1}_{\mc Z}$ of a set $\mc Z\subseteq \R^d$ is defined through $\mathbf{1}_{\mc Z}(z) = 1$ if $z\in\mc Z$; $=0$ otherwise. For any closed set $\mc Z \subseteq \R^d$, $\mc P(\mc Z)$ denotes the family of all Borel probability distributions on~$\mc Z$, and $\EE_{\mu}[\cdot]$ is the expectation operator with respect to~$\mu \in \mc P(\mc Z)$. For extended real-valued functions $f:\mc Z\rightarrow \overline\R$, we set $\EE_{\mu}[f]=\EE_{\mu}[\max(f,0)]+\EE_{\mu}[\min(f,0)]$, which is well-defined by our conventions of extended arithmetic. We say that $f$ is proper if $f(z)>-\infty$ for all $z\in \mc Z$ and if there exists $z \in \mc Z$ with $f(z) < \infty$. The convex conjugate of~$f$ is the function $f^*:\R^d \rightarrow \overline \R$ defined via $f^*(y) = \sup_{x\in\R^d}y^\top x-f(x)$. Finally, $\|\cdot\|$ denotes the Euclidean norm.

\section{Problem Setup}
\label{sec:pre}
%

We study data-driven decision problems under uncertainty. Each possible decision results in an uncertain loss $\ell(Z)$ characterized by a (decision-dependent) upper-semicontinuous loss function $\ell : \mc Z \to \overline \R$ and a (decision-{\em in}depen\-dent) vector $Z \in \mc Z$ of risk factors. Thus, we identify different loss functions~$\ell$ with different decisions. This abstract viewpoint obviates the need to introduce specific symbols for decisions and thus avoids clutter. The set of all feasible loss functions is denoted by $\mc L$. The risk associated with a decision $\ell \in \mc L$ is then defined as the expected loss $\mathbb{E}_{\mu_0}[\ell(Z)]$, where~$\mu_0$ denotes the probability distribution of~$Z$. In addition, the {\em optimal} risk is defined as the risk of the least risky admissible decision and thus equals 
\[
    \inf\limits_{\ell \in \mc L} \EE_{\mu_0}[\ell(Z)].
\]
In practice, the distribution~$\mu_0$ is often unknown, in which case it is impossible to evaluate the risk of any non-trivial decision exactly. Nevertheless, the decision-maker often has access to a finite number of training samples drawn from~$\mu_0$. In this case it is expedient to approximate the unknown true distribution~$\mu_0$ with the empirical distribution~$\hat\mu$, that is, the discrete uniform distribution on the training samples, and to approximate the true risk under~$\mu_0$ with the empirical risk under~$\hat\mu$. However, it is well-known that this sample average approximation can fall prey to the {\em optimizer's curse} or the {\em optimization bias}~\citep{shapiro2003monte}. This means that the risk of the optimal decision is optimistically biased. In other words, the optimal decision displays a higher risk on unseen test data than on the training data, which was used for its construction.

Nowadays there is burgeoning research on DRO that aims to mitigate the optimizer's curse. Unlike the sample average approximation, which places full trust in the empirical distribution~$\hat \mu$ and thus minimizes the empirical risk, DRO acknowledges that the unknown data-generating distribution is likely to differ from~$\hat\mu$ and thus adopts a worst-case perspective. Specifically, DRO seeks a decision that minimizing the worst-case risk, where the worst case is taken over all distributions in a prescribed ambiguity set~$\mc B$. The ambiguity set is usually constructed from the training data and is supposed to encompass the unknown true distribution $\mu_0$ with high confidence.

While DRO is often promoted as a remedy for the optimizer's curse, there are scenarios where the data-generating distribution diverges from the distribution over which the trained decision will be deployed. In such settings, there is an even greater urgency to find a decision that minimizes the worst-case risk across all distributions in a suitable ambiguity set. Formally, we will study the following DRO formulation.
\begin{equation}
    \inf\limits_{\ell \in \mc L}~\sup\limits_{\mu\in \mc B} \EE_{ \mu} [\ell(Z)]
    \label{eq:dro}
\end{equation}
To be able to control the model's conservativeness, we typically define the ambiguity set~$\mc B$ as a neighborhood ball $\mathcal{B}_{\mathds D}^r(\hat{\mu}) = \{\mu \in \mathcal{P}(\mathcal{Z}): \mathds D(\mu, \hat{\mu}) \leq  r\}$ of radius~$r\geq 0$ around some nominal distribution~$\hat{\mu}$ ({\em e.g.}, an empirical distribution). This ball is defined with respect to a discrepancy measure~$\mathds D$ on~$\mc P(\mc Z)$. Hence, any instance of the DRO problem~\eqref{eq:dro} is comprehensively encoded by the tuple $ (\mc L, \mc Z, \hat \mu, \mathds D, r)$. Similarly, the instance of the inner worst-case expectation problem in~\eqref{eq:dro} corresponding to a particular loss function $\ell\in\mc L$ can be encoded by the tuple $(\ell, \mc Z, \hat \mu, \mathds D, r)$.

In this paper, we lift the original domain~$\mathcal Z$ of the uncertain risk factors to a higher-dimensional space $\mathcal{V} \times \mathcal{W}$. We then introduce a new OT discrepancy on $\mathcal{V} \times \mathcal{W}$ with conditional moment constraints on $\mc W$, which will allow us to re-interpret~$\mathcal{B}$ as an OT-based ambiguity set. To initiate our discussion, we first formally introduce the notion of an OT discrepancy with conditional moment constraints.

\begin{definition}[OT discrepancy with conditional moment constraints]
\label{def:mot}
If $\mc V\subseteq \R^{d_v}$ and $\mc W\subseteq \R_+$ are convex and closed sets, $\mc G$ is a countably generated sub‐$\sigma$‐field of the Borel $\sigma$-field on $(\mc V\times\mc W)^2$ and $c: (\mc V \times \mc W)^2 \rightarrow  (-\infty,+\infty]$ is lower semicontinuous and bounded below, then the OT discrepancy on $\mc P(\mc V\times \mc W)$ with conditional moment constraints induced by $c $ and $ \mc G$ is the function $\mathds M: \mc P(\mc V \times \mc W)^2 \to (-\infty,+\infty]$ defined~via
\begin{equation}
\label{eq:mot}
\mathds M(\nu, \hat \nu) = \left\{
\begin{array}{clll}
\inf &\EE_{\pi}[c((V , W), (\hat V, \hat W))]\\
\suchthat  &\pi \in \mc P((\mc V \times \mc W)^2)\\[-0.7ex]
& \pi_{(V, W)}  = \nu,\ \pi_{(\hat V, \hat W)}  = \hat \nu \\
& \EE_\pi[W | \mc G] = 1 \quad \pi \text{-a.s,}
\end{array}\right.
\end{equation}
where $\pi_{(V, W)}$ and $\pi_{(\hat V, \hat W)}$ are the marginal distributions of $(V, W)$ and $(\hat V, \hat W)$ under~$\pi$. 
\end{definition}

The generalized OT problem~\eqref{eq:mot} introduced in Definition \ref{def:mot} is not always feasible. However, in the remainder of the paper we will focus on particular choices of~$\hat \nu$, $\nu$, $c$ and~$\mc G$ for which the problem is guaranteed to be feasible.

\begin{rmk}
If $\mc G = \{\emptyset,(\mc W\times\mc V)^2 \}$ is the trivial $\sigma$-field, the conditional moment constraint $\EE_\pi[W | \mc G] = 1$ reduces to the deterministic constraint $\E_\pi[W]=1$. Similarly, if $\mc G = \sigma(\hat V)$ is generated by the random variable $\hat V$, then the constraint $\EE_\pi[W | \mc G] = 1$ is equivalent to $\EE_\pi[W | \hat V] = 1$, and if $\hat V$ is supported on $n$~points, this uncertain constraint decomposes into $n$ deterministic constraints--one per support point.
\end{rmk}

The OT discrepancy $\mathds M$ gives rise to a neighborhood ball $\mc B_{\mathds M}^r(\hat \nu)=\{\nu\in\mathcal P(\mc V\times\mc W): \mathds M(\nu,\hat\nu)\leq r\}$ of radius $r\geq 0$ around a prescribed nominal distribution~$\hat\nu \in \mc P(\mc V \times \mc W)$. Given an upper semicontinuous and $\hat \nu$-integrable loss function~$f$, we can then introduce the following OT-DRO problem with conditional moment constraints.
\begin{equation}
    \sup\limits_{\nu \in\mc B_{\mathds M}^r(\hat \nu)} \EE_\nu [f(V, W)]
    \label{eq:mot-dro}
\end{equation}
Note that~\eqref{eq:mot-dro} is fully determined by the tuple~$(f, \mc V, \mc W, \mc G, \hat \nu, c, r)$. The following mild integrability and growth conditions on~$c$ will be adopted throughout the paper.

\begin{assumption}[Integrability and growth conditions on~$c$]
\label{ass:growth}
There are constants $\delta\in(0,1]$ and $k>0$, and a Borel function $\varphi:\mathcal W\to\R_+$ such that the following hold.

\begin{enumerate}[label=\normalfont(\alph*)]
    \item \textit{Integrability along $\hat w$:} 
          $\displaystyle \mathbb E_{\hat\nu}[\varphi(\hat W)]<\infty$;
    \item \textit{One-sided super-linear growth along $w$:}
          \[
              c\bigl((v,w),(\hat v,\hat w)\bigr)
              \;\ge\;
              k\,\lVert w\rVert^{1+\delta}-\varphi(\hat w)
              \quad
              \forall\,(v,w),(\hat v,\hat w)\in\mathcal V\times\mathcal W.
          \]
\end{enumerate}
\end{assumption}

\Cref{ass:growth} ensures that the infimum of the generalized OT problem~\eqref{eq:mot} is attained if $\mathds M(\nu, \hat \nu)< \infty$ (see \Cref{lemma:mot-attainment} in the appendix). Thus, it allows us to rewrite problem~\eqref{eq:mot-dro} as the following explicit optimization over probability couplings:
\begin{equation}
    \begin{array}{cclll}
    &\sup &\EE_{\pi}[f(V, W)]\\
    &\suchthat & \pi \in \mc P ((\mc V \times \mc W)^2)\\[-0.7ex]
    &&\pi_{(\hat V, \hat W)}  = \hat \nu\\
    &&  \EE_\pi[W | \mc G] = 1 \quad \pi \text{-a.s.} \\
    && \EE_\pi[c((V, W), (\hat V, \hat W))] \leq r. 
    \end{array}
    \label{eq:mot-dro-extended}
\end{equation}
In the remainder, we showcase several advantages of the OT-DRO model with conditional moment constraints. In Section~\ref{sec:mot-express}, we demonstrate that problem~\eqref{eq:mot-dro} offers sufficient flexibility to provide a unified DRO model, which generalizes a wide range of existing DRO models based on probability discrepancies. The key idea is to identify a lifting map that connects the tuple $(\ell, \mc Z, \hat \mu, \mathds D, r)$ characterizing the inner problem in~\eqref{eq:dro} to the tuple $(f, \mc V, \mc W, \mc G, \hat \nu, c, r)$ characterizing an instance of~\eqref{eq:mot-dro}. Section~\ref{sec:duality} then develops a duality theory for problem~\eqref{eq:mot-dro}. Finally, in Section~\ref{sec:cost_function}, we show that problem~\eqref{eq:mot-dro} gives rise to previously unexplored DRO models that seamlessly interpolate between {$\phi$}-divergence and Wasserstein distance-based DRO models.

\section{Expressiveness of OT-DRO with Conditional Moment Constraints}
\label{sec:mot-express}

In the following, we show how the proposed OT-DRO model~\eqref{eq:mot-dro} with conditional moment constraints serves as a unifying framework for a broad class of DRO problems. In particular, it encompasses Wasserstein DRO \citep{ref:wozabal2012framework, mohajerin2018data, ref:zhao2018data, ref:blanchet2019quantifying, ref:gao2022distributionally}, generalized $\phi$-divergence DRO~\citep{ref:bayraksan2015data,namkoong2016stochastic,ref:van2021data}, and Sinkhorn DRO~\citep{wang2021sinkhorn,azizian2022regularization,dapogny2022entropy}. Throughout this section, we define the nominal probability distribution~$\hat \mu$ as the discrete empirical distribution over $n$ training samples $\{\hat z_i\}_{i=1}^n$. Specifically, we define
$\hat\mu= \frac{1}{n}\sum_{i=1}^n \delta_{\hat z_i}$,
where $\delta_{\hat z_i}$
denotes the Dirac distribution that concentrates unit mass at the atom $\hat z_i \in \mc Z$.

\subsection{Generalized \texorpdfstring{$\phi$}~-Divergence-Based DRO}
We first define the notion of an entropy function, which is essential to define a generalized $\phi$-divergence. 
\begin{definition}[Entropy function]
    An entropy function $\phi: \R \to \overline\R$ is any lower semicontinuous convex function with $\phi(1) =0$ and $\rm{dom}(\phi) \subseteq \R_+$. The speed of growth of $\phi$ at  $+\infty$ is defined as $\phi_\infty' = \lim_{t \to +\infty} \phi(t)/t \in \overline{\R}$.
\end{definition}
In DRO research, $\phi$-divergences are usually defined as discrepancy measures between two probability distributions $ \mu $ and $ \hat \mu $ with the property that $ \hat \mu $ is absolutely continuous with respect to $ \hat \mu $ (denoted as $\hat\mu\ll\mu$). In this paper, we study generalized $ \phi $-divergences that do not impose any absolute continuity conditions on the probability distributions. Consequently, the resulting divergence ambiguity sets include distributions that assign positive mass to atoms not present in the support of $ \hat \mu $.

\begin{definition}[Generalized $\phi$-divergence~\citep{ali1966general,csiszar1964informationstheoretische,ref:csiszar1967information}]
\label{defi-phi}
    The generalized $\phi$-di\-ver\-gence between two probability distributions $\mu, \hat\mu \in \mc P(\mc Z)$ is defined as
    \begin{equation*}
        \mathds D_\phi(\mu, \hat \mu) = \int_{\mc Z}
        \frac{\diff \hat \mu}{\diff \rho}(z)  \cdot  \phi\left( \frac{\frac{\diff \mu}{\diff \rho}(z) }{ \frac{\diff \hat \mu}{\diff \rho}(z)}\right) \diff \rho(z),
     \end{equation*}
     where $\phi$ is an arbitrary entropy function, and~$\rho$ is a dominating measure of~$\mu$ and~$\hat \mu$ satisfying $\mu \ll \rho$ and $\hat \mu \ll \rho$. In addition, we use the conventions $\phi(0) = \lim_{t\rightarrow 0^+} \phi(t)$, $0\cdot\phi(\tfrac{0}{0}) =0$ and $0\cdot\phi(\tfrac{\alpha }{0}) =\alpha \cdot \lim_{t \to +\infty} \frac{\phi(t)}{t} = \alpha\cdot \phi'_\infty$ for any constant $\alpha >0$. 
\end{definition}
A dominating measure $\rho$ always exist. For example, one may set $\rho=\mu+\hat\mu$. In Remark~\ref{rem:Lebesgua-decomposition} below we will see that $\mathds D_\phi(\mu, \hat \mu)$ is in fact independent of the choice of~$\rho$.

\begin{rmk}
A further impetus for adopting the generalized $ \phi $-divergence is the relationship $\mathds{D}_\phi(\mu, \hat\mu) = \mathds{D}_\psi(\hat \mu, \mu)$, where $ \psi $ denotes the Csiszár dual of $ \phi$ and is defined through $ \psi(t) = t \cdot \phi(\tfrac{1}{t}) $ for all $t>0$, $\psi(0)=\phi'_\infty$ and $\psi'_\infty=\phi(0)$ \citep[Proposition 2]{sason2018f}. This relationship suggests that, even though $\phi$-divergences are generically asymmetric, we may focus on divergence ambiguity sets of the form $\{\mu \in \mathcal{P}(\mathcal{Z}): \mathds D(\mu, \hat{\mu}) \leq  r\}$, with the nominal distribution being the second argument of the divergence. 
\end{rmk}
Table~\ref{tab:phi-divergence} summarizes several common $\phi$-divergences and their Csiszár duals. 
\begin{table}[h!]
\setlength\tabcolsep{3pt}
\small
    \centering
\begin{tabular}{l l l c c}
\toprule
        Divergence & $\phi(t)$ & $\psi(t)$ &$\phi_\infty' = \infty \textbf{?}$ &$\psi_\infty' = \infty \textbf{?}$ \\ \hline
        Kullback-Leibler & $t \cdot \log(t) -t +1$ &$-\log(t) + t - 1$ & \faCheck &  \faTimes\\
        Burg Entropy & $-\log(t) + t - 1$ &  $t \cdot \log(t) -t +1$ &\faTimes & \faCheck\\
        $J$-divergence &$(t-1) \log(t)$ & $(t-1) \log(t)$ &\faCheck & \faCheck \\
        $\chi^2$-distance &$\frac{1}{t}(t-1)^2$ & $(t-1)^2$& \faTimes & \faCheck\\
        Modified $\chi^2$-distance &$(t-1)^2$ &$\frac{1}{t}(t-1)^2$ &\faCheck & \faTimes\\ 
        Hellinger distance & $(\sqrt{t}-1)^2$ &$(\sqrt{t}-1)^2$&\faTimes &\faTimes\\
        $\chi$-divergence of order~$m>1$ & $|t-1|^m$ &$t|\frac{1}{t}-1|^m$ &\faCheck & {\faTimes}\\
        Total variation distance &$|t-1|$ &$|t-1|$&\faTimes & {\faTimes}\\
        \bottomrule
        \end{tabular}
    \caption{Examples of $\phi$-divergences and their Csiszár duals. }
\label{tab:phi-divergence}
\vspace{-3mm}
\end{table}

Next, we will show that the generalized $\phi$-divergence-based DRO problem
\begin{equation}
    \sup\limits_{\mu \in \mc P(\mc Z)} \left\{ \EE_\mu[\ell(Z)] : \mathds D_\phi(\mu, \hat \mu)\leq r\right\}
    \label{eq:phi-div-dro}
\end{equation}
can be recast as an instance of the OT-DRO problem~\eqref{eq:mot-dro} with conditional moment constraints. This reformulation is based on a lifting of the original space~$\mc Z$ of uncertain parameters and the construction of an augmented nominal probability distribution.

The next lemma offers a new decomposition formula for generalized $\phi$-divergences. 

\begin{lemma}[Decomposition of generalized $\phi$-divergences]
The generalized $\phi$-di\-ver\-gence between two probability distributions $\mu, \hat\mu \in \mc P(\mc Z)$ can be decomposed as
\begin{equation}
\label{eq:phi-decom}
    \mathds D_\phi(\mu, \hat \mu)  = \displaystyle\int_{\mc Z}
    \phi\left( \frac{\frac{\diff \mu}{\diff \rho}(z) }{ \frac{\diff \hat \mu}{\diff \rho}(z)}\right) \diff \hat\mu(z) +  \phi_\infty' \cdot \mu \left(\frac{\diff \hat \mu}{\diff \rho}(z)=0\right),
\end{equation}
where~$\rho$ is any dominating measure of~$\mu$ and~$\hat \mu$ satisfying $\mu \ll \rho$ and $\hat \mu \ll \rho$.
\label{lemma:decomposition}
\end{lemma}

\begin{proof}
By Definition \ref{defi-phi},  we have
\begin{align*}  
    \mathds D_\phi(\mu, \hat \mu) = & \int_{{\mc Z}_+}
    \frac{\diff \hat \mu}{\diff \rho}(z)  \cdot \phi\left(\frac{\frac{\diff \mu}{\diff \rho}(z) }{ \frac{\diff \hat \mu}{\diff \rho}(z)}\right) \diff \rho(z) + \int_{{\mc Z}_0}
    \frac{\diff \hat \mu}{\diff \rho}(z)  \cdot \phi\left(\frac{\frac{\diff \mu}{\diff \rho}(z) }{ \frac{\diff \hat \mu }{\diff \rho}(z)}\right) \diff \rho(z) \\
      = &  \int_{\mc Z}
     \phi\left(\frac{\frac{\diff \mu}{\diff \rho}(z) }{ \frac{\diff \hat \mu}{\diff \rho}(z)}\right) \diff {\hat \mu }(z) +  \int_{{\mc Z}_0}\phi_\infty'\cdot \frac{\diff \mu}{\diff \rho}(z)\,  \diff \rho(z) \\
     =  &  \int_{\mc Z}
     \phi\left(\frac{\frac{\diff \mu}{\diff \rho}(z) }{ \frac{\diff \hat \mu}{\diff \rho}(z)}\right) \diff {\hat \mu }(z) +  \phi_\infty' \cdot \mu({\mc Z}_0),
\end{align*}
where ${\mc Z}_+ = \left \{z \in \mc Z :{\diff \hat \mu}/{\diff \rho}(z)>0\right \}$ and ${\mc Z}_0 = \left\{z \in \mc Z :{\diff \hat \mu}/{\diff \rho}(z)=0\right \}$. Here, the second equality exploits our standard conventions from Definition~\ref{defi-phi}.
\end{proof}

\Cref{lemma:decomposition} implies that if~$ \phi'_\infty =\infty$, then our generalized definition of a $\phi$-divergence collapses to the traditional one, which is given by
\[
    \mathds D_{\phi}(\mu, \hat\mu) = \begin{cases} \displaystyle\int_{\mc Z}\phi\left(\frac{\diff \mu}{\diff \hat \mu }(z)\right) \diff \hat \mu(z) & \text{if } \mu \ll \ \hat \mu,  \\ +\infty & \text{otherwise}. \end{cases}  
\]

\begin{rmk} [Lebesgue decomposition of $\mu$]
\label{rem:Lebesgua-decomposition}
\Cref{lemma:decomposition} is closely related to the classical Lebesgue decomposition theorem in measure theory. Indeed, the generalized $\phi$-divergence
\eqref{eq:phi-decom} can be reformulated as
\begin{equation}
\label{eq:phi-decom-without-rho}
\mathds D_\phi(\mu, \hat \mu) = \int_{\mc Z} \phi\left(\frac{\diff \mu^c}{\diff \hat \mu}(z)\right) \diff \hat \mu (z) + \phi_\infty' \cdot \mu^\perp(\mc Z),
\end{equation}
where~$\mu^c$ and~$\mu^\perp$ denote the probability measure~$\mu$ restricted to the sets~$\mc Z_+$ and~$\mc Z_0$ from the proof of \Cref{lemma:decomposition}, respectively. By construction, $\mu^c$ is thus absolutely continuous with respect to~$\hat{\mu}$, while $\mu^\perp$ is singular with respect to $\hat{\mu}$. Consequently, the expression $\mu = \mu^c+\mu^\perp$ represents the Lebesgue decomposition of $\mu$ relative to $\hat{\mu}$.
\end{rmk}

We can now use the insights from \Cref{lemma:decomposition} and Remark~\ref{rem:Lebesgua-decomposition} to characterize the worst-case distributions of the $\phi$-divergence-based DRO problem~\eqref{eq:phi-div-dro}.
\begin{proposition} [Worst-case distributions of problem \eqref{eq:phi-div-dro}]
\label{prop:phi-worst-case}
If $\mc Z$ is compact, then problem~\eqref{eq:phi-div-dro} is solved by a distribution~$\mu^\star$ supported on $\hat{\mc Z}=\{\hat z_i\}_{i=1}^{n+1}$, where $\hat z_i$ is the $i$-the atom of~$\hat\mu$, $i\in[n]$, and $\hat z_{n+1} \in \mathop{\arg\max}_{z \in\mc Z} \ell(z)$ is a worst-case scenario. 
\end{proposition}

\begin{proof}
We may assume without loss of generality that the solution $\hat z_{n+1}$ of the maximization problem $\max_{z\in \mc Z}\ell(z)$ is unique and that $\hat z_{n+1}\neq \hat z_i$ for all $i\in[n]$. By the decomposition~\eqref{eq:phi-decom-without-rho}, the $\phi$-divergence-based DRO problem~\eqref{eq:phi-div-dro} is equivalent to
\begin{equation}
    \sup\limits_{\mu \in\mc P (\mc Z)} \left\{ \int_{\mc Z}\ell (z)\,\diff\mu(z) \;:\; \int_{\mc Z} \phi\left(\frac{\diff \mu^c}{\diff \hat \mu}(z)\right) \diff \hat\mu(z) +  \phi_\infty' \cdot \mu^\perp(\mc Z)\leq r\right\}.
    \label{eq:phi-div-dro-decomposed}
\end{equation}
As $\mc Z$ is compact, the family $\mathcal{P}(\mc Z)$ of all probability distributions on~$\mc Z$ is compact with respect to the weak topology. In addition, as $\ell$ is upper semicontinuous on the compact set~$\mc Z$, its integral is weakly upper semicontinuous in $\mu$ thanks to Fatou's lemma. Thus, an optimal solution~$\mu\opt$ to problem~\eqref{eq:phi-div-dro} is guaranteed to exist. Below, we will individually analyze the cases~$\phi'_\infty < \infty$ and $\phi'_\infty  = +\infty$.
\paragraph{Case~I ($\phi'_\infty = +\infty$)} The $\phi$-divergence constraint in~\eqref{eq:phi-div-dro-decomposed} ensures that ${\mu\opt}^\perp(\mc Z)=0$ and that $\mu\opt$ is absolutely continuous with respect to the nominal distribution~$\hat \mu$. Hence, $\mu\opt$ is supported on the $n$ atoms $\{\hat z_i\}_{i=1}^n$ of $\hat\mu$, all of which belong to~$\hat{\mc Z}$. 
\paragraph{Case~II ($\phi'_\infty < \infty$)} 
We may assume without loss of generality that ${\mu\opt}^\perp(\mc Z) > 0$ for otherwise Case~I prevails. 
By the definition of the Lebesgue decomposition, ${\mu\opt}^c$ is supported on $\{\hat z_i\}_{i=1}^n$, while ${\mu\opt}^\perp(\{\hat z_i\}_{i=1}^n)=0$. To show that ${\mu\opt}^\perp$ must be supported on the point~$\hat z_{n+1}$, we assume for the sake of argument that ${\mu\opt}^\perp(\{\hat z_{n+1}\})<{\mu\opt}^\perp(\mc Z)$, and we construct a new probability measure $\tilde \mu={\mu\opt}^c+{\mu\opt}^\perp(\mc Z) \cdot \delta_{\hat z_{n+1}}$. By construction, we have ${\tilde \mu}^c={\mu\opt}^c$ and ${\tilde \mu}^\perp(\mc Z)={\mu\opt}^\perp(\mc Z)$, which implies via~\eqref{eq:phi-decom-without-rho} that $\mathds D_\phi(\tilde \mu, \hat \mu) = \mathds D_{\phi}(\mu\opt, \hat \mu)\leq r$. Hence, $\tilde \mu$ is feasible in~\eqref{eq:phi-div-dro-decomposed}. In addition, as ${\mu\opt}^\perp(\{\hat z_{n+1}\})<{\mu\opt}^\perp(\mc Z)$ and as $\hat z_{n+1}$ is the unique maximizer of $\ell$ over $\mc Z$, it is clear that 
\begin{align*}
    \int_{\mc Z}\ell (z)\,\diff\mu\opt(z) & = \int_{\mc Z} \ell (z) \diff {\mu\opt}^c (z) + \int_{\mc Z} \ell (z) \diff {\mu\opt}^\perp (z) \\ 
    & < \int_{\mc Z} \ell (z) \diff {\mu\opt}^c (z) + {\mu\opt}^\perp(\mc Z) \cdot \ell(\hat z_{n+1}) = \int_{\mc Z}\ell (z)\,\diff\tilde\mu(z),
\end{align*}
which contradicts the optimality of $\mu\opt$. Hence, our assumption was false, and we must have ${\mu\opt}^\perp(\{\hat z_{n+1}\})={\mu\opt}^\perp(\mc Z)$, that is, ${\mu\opt}^\perp$ is supported on $\{\hat z_{n+1}\}$. From this we may finally conclude that $\mu\opt$ is supported on~$\hat{\mc Z}$, and thus the claim follows.
\end{proof}

\begin{rmk}[Worst-case distributions of problem \eqref{eq:phi-div-dro}] 
\Cref{prop:phi-worst-case} reveals that the worst-case distribution of problem~\eqref{eq:phi-div-dro} is supported on the atoms $\{\hat z_i\}_{i=1}^n$ of $\hat\mu$ and---provided that $\phi_\infty'< \infty$---some worst-case scenario $\hat z_{n+1}\in\arg\max_{z\in\mc Z}\ell(z)$. From the proof of \Cref{prop:phi-worst-case} it is also evident that if $r$ exceeds $\phi(0) + \phi_\infty'$, then problem \eqref{eq:phi-div-dro} is solved by a distribution entirely supported on~$\arg\max_{z \in \mc Z} \ell(z)$. Table~\ref{tab:phi-divergence} highlights the divergences that possess a finite asymptotic speed of growth~$\phi'_\infty < \infty$. \Cref{prop:phi-worst-case} extends the results for finite probability spaces reported in \citep{ref:bayraksan2015data}.
\end{rmk}

The following main theorem shows that if $\mathds D=\mathds D_\phi$ is a generalized $\phi$-divergence, then the worst-case expectation problem in~\eqref{eq:dro} can be recast as an instance of the proposed OT-DRO model with conditional moment constraints \eqref{eq:mot-dro}.

\begin{theorem}[Generalized $\phi$-divergence-based DRO]
\label{theorem:phi-div-mot} Let $(\ell, \mc Z, \hat \mu, \mathds D, r)$ be an instance of the worst-case expectation problem in~\eqref{eq:dro} with $\mc Z$ compact, $\hat\mu=\! \frac{1}{n}\sum_{i=1}^n \delta_{\hat z_i}$ and $\mathds D=\mathds D_\phi$. Define $\mathcal{V} = \mathcal{Z}$, $\mathcal{W}=\mathbb{R}_+$, ${\mc G} = \{\emptyset,(\mc W\times\mc V)^2 \}$,  $f(v,w)=\ell(v)\cdot w$ and
\begin{equation*}
\label{eq:ref_measure_phi}
    \hat{\nu} = (1-\epsilon)\cdot \frac{1}{n} \sum_{i=1}^n \delta_{\left(\hat{z}_i,\frac{1}{1-\epsilon}\right)} + \epsilon \cdot \delta_{\left(\hat{z}_{n+1},0\right)}
\end{equation*}
for some $\epsilon\in(0,1)$ and $\hat z_{n+1} \in \mathop{\arg\max}_{z \in\mc Z} \ell(z)$.
In addition, set
\begin{equation}
    c((v, w), (\hat{v},\hat{w})) = \left\{ \begin{array}{ll} g(v) \cdot \phi\left(\frac{w}{g(v)}\right) & \text{if }v = \hat{v}\in \{\hat z_i\}_{i=1}^{n+1},\\
    \infty & \text{otherwise,}
    \end{array}\right.
\label{eq:cost-phi-mot-equiv}
\end{equation}
where $g(v)=\frac{1}{1-\epsilon}$ if $v\in \{\hat{z}_i\}_{i=1}^n$ and $g(v)=0$ if $v = \hat{z}_{n+1}$. 
Then, the inner worst-case expectation problem in~\eqref{eq:dro} has the same optimal value as problem~\eqref{eq:mot-dro}.
\end{theorem}




\begin{proof}
As $\mathds D=\mathds D_\phi$, the worst-case expectation problem in~\eqref{eq:dro} is equivalent to~\eqref{eq:phi-div-dro}. By \Cref{prop:phi-worst-case}, we may restrict~\eqref{eq:phi-div-dro} to the discrete distributions supported on~$\hat{\mc Z}=\{\hat z_i\}_{i=1}^{n+1}$ without sacrificing optimality. Any $\mu\in\mc P(\hat{\mc Z})$---and in particular $\hat\mu$---is absolutely continuous with respect to $\rho = \tfrac{1-\epsilon}{n} \sum_{i=1}^n \delta_{\hat{z}_i}+\epsilon \cdot \delta_{\hat z_{n+1}}$. By the definition of the generalized $\phi$-divergence, problem~\eqref{eq:phi-div-dro} is thus equivalent to
\begin{equation}
\label{eq:phi-1}
\begin{aligned}
    \max \limits_{\mu \in \mc P (\hat{\mc Z})}\left\{\EE_\rho\!\left[ \ell(Z) \cdot \frac{\diff \mu}{\diff \rho}(Z)\right]
    \;:\; \EE_\rho\!\left[ \frac{\diff \hat \mu}{\diff \rho}(Z) \cdot \phi\left( \frac{\frac{\diff \mu}{\diff \rho}(Z) }{ \frac{\diff \hat \mu}{\diff \rho}(Z)}\right)\right] \leq r\right\}.
\end{aligned}
\end{equation}
The remainder of the proof proceeds in two steps. By introducing an auxiliary random variable~$W$ to replace the likelihood ratio $\frac{\diff \mu}{\diff \rho}(Z)$, we will first demonstrate that problem~\eqref{eq:phi-1} can be reformulated as an optimization problem over probability measures in~$\mc P(\hat{\mc Z} \times \mc W)$ (Step~1). Next, we will show that the resulting reformulation can be lifted to an optimization problem over couplings in $\mc P((\mc V \times \mc W)^2)$ (Step~2). 

\noindent\textit{Step 1.} We start by reformulating~\eqref{eq:phi-1} as an optimization problem over probability distributions $\nu\in \mathcal{P}(
\hat{\mathcal{Z}} \times \mathcal{W})$, where $\mathcal{W} = \mathbb{R}_+$ represents a space of likelihood ratios.
\begin{equation}
    \begin{array}{ccll}
         & \max  & \EE_{\nu}\left[\ell(Z)\cdot W \right]\\
         &\suchthat  &\nu \in \mc P(\hat{\mc Z} \times \mc W)\\[-0.5ex] 
         && {\nu}_Z = \rho\\ 
         && \EE_{\nu}\left[W\right] = 1  \\
         && \EE_{\nu}\left[ \frac{\diff \hat \mu}{\diff \rho}(Z)\cdot \phi\left( \frac{W}{\frac{\diff  \hat \mu}{\diff \rho}(Z)}\right) \right] \leq r
    \end{array}
\label{eq:mot-phi-refor-1}
\end{equation} 
We will first show that for any~$\mu$ feasible
in~\eqref{eq:phi-1} there is~$\nu$ feasible in~\eqref{eq:mot-phi-refor-1} with the same objective function value. To this end, choose any~$\mu$ feasible in~\eqref{eq:phi-1} and define
\[
    \nu = (1-\epsilon)\cdot \frac{1}{n}\sum_{i=1}^n \delta_{\left(\hat z_i,\frac{\diff \mu}{\diff \rho}(\hat z_i) \right)} +\epsilon \cdot \delta_{\left(\hat z_{n+1},\frac{\diff \mu}{\diff \rho}(\hat z_{n+1}) \right)}. 
\]
By construction, the discrete distribution $\nu$ of $(Z,W)$ belongs to $\mc P(\hat{\mc Z} \times \mc W)$. In addition, the marginal distribution of~$Z$ under~$\nu$ coincides with~$\rho$, implying that $\nu$ satisfies the marginalization constraint in~\eqref{eq:mot-phi-refor-1}. As the conditional distribution of~$W$ given~$Z$ under~$\nu$ coincides with the Dirac distribution at~$\frac{\diff \mu}{\diff \rho}(Z)$,
we further have
\begin{equation*}
    \EE_{\nu}\left[W\right] =  \EE_\nu\!\left[ \frac{\diff \mu}{\diff \rho}(Z) \right]=  \EE_\rho\!\left[ \frac{\diff \mu}{\diff \rho}(Z) \right] = 1,
\end{equation*}
that is, $\nu$ satisfies the normalization constraint in~\eqref{eq:mot-phi-refor-1}. Similarly, we find
\[
    \EE_{ \nu}\left[ \frac{\diff \hat \mu}{\diff \rho}(Z)\cdot \phi\left( \frac{W}{\frac{\diff  \hat \mu}{\diff \rho}(Z)}\right) \right]
    = \EE_\rho\!\left[ \frac{\diff \hat \mu}{\diff \rho}(Z) \cdot \phi\left( \frac{\frac{\diff \mu}{\diff \rho}(Z) }{ \frac{\diff \hat \mu}{\diff \rho}(Z)}\right)\right] \leq r,
\]
where the inequality holds because~$\mu$ is feasible in~\eqref{eq:phi-1}. Hence, $\nu$ satisfies also the divergence constraint in~\eqref{eq:mot-phi-refor-1}. Finally, an analogous reasoning reveals that the objective value of~$\mu$ in~\eqref{eq:phi-1} matches that of~$\nu$ in~\eqref{eq:mot-phi-refor-1}. As $\mu$ was chosen arbitrarily, this implies that the optimal value of~\eqref{eq:mot-phi-refor-1} provides an upper bound on that of~\eqref{eq:phi-1}.

Next, we show that for any~$\nu$ feasible
in~\eqref{eq:mot-phi-refor-1} there is~$\mu$ feasible in~\eqref{eq:phi-1} with the same objective function value. To this end, choose any~$\nu$ feasible in~\eqref{eq:mot-phi-refor-1}, and define
\[
    \mu = (1-\epsilon)\cdot \frac{1}{n}\sum_{i=1}^n \EE_{\nu} [W|Z=\hat z_i]\cdot \delta_{\hat z_i} +\epsilon \cdot \EE_{\nu} [W|Z=\hat z_{n+1}] \cdot \delta_{\hat z_{n+1}}. 
\]
By construction, the discrete distribution $\mu$ belongs to $\mc P(\hat{\mc Z})$. In addition, it is absolutely continuous with respect to~$\rho$, and $\frac{\diff \mu}{\diff \rho}(Z)=\EE_{\nu} [W|Z]$. 
%
This implies that
\begin{align*}
    \E_{\rho}\!\left[\! \frac{\diff \hat \mu}{\diff \rho}(Z) \cdot \phi\left( \frac{\frac{\diff \mu}{\diff \rho}(Z)}{\frac{\diff \hat \mu}{\diff \rho}(Z)} \right)  \right] & = \E_{\nu}\!\left[\! \frac{\diff \hat \mu}{\diff \rho}(Z) \cdot \phi\left( \frac{\EE_{\nu} [W|Z]}{\frac{\diff \hat \mu}{\diff \rho}(Z)} \right)  \right] \\
    & \leq \E_{\nu}\!\left[\! \frac{\diff \hat \mu}{\diff \rho}(Z) \cdot \phi\left( \frac{W}{\frac{\diff \hat \mu}{\diff \rho}(Z)} \right)  \right] \leq r,
\end{align*}
where the equality exploits the feasibility of $\nu$ in~\eqref{eq:mot-phi-refor-1}, which ensures that the marginal distribution of~$Z$ under~$\nu$ coincides with~$\rho$, the first inequality follows from  Jensen's inequality and the tower property of conditional expectations, and the second inequality holds because $\nu$ satisfies the divergence constraint in~\eqref{eq:mot-phi-refor-1}. Hence, $\mu$ satisfies also the divergence constraint in~\eqref{eq:phi-1}. Similarly, we have
\[
    \EE_\mu[\ell(Z)] = \EE_\rho\!\left[\ell(Z)\cdot  \frac{\diff  \mu}{\diff \rho}(Z)\right] = \EE_{ \nu}\left[\ell(Z) \cdot \EE_{\nu} [W|Z]  \right] = \EE_{ \nu}[\ell (Z) \cdot W],
\]
where the first equality follows from the Radon-Nikodym theorem, the second equality exploits the definition of~$\mu$ as well as the relation~${\nu}_Z = \rho$, and the last equality follows from the tower property of conditional expectations. Thus, the objective value of~$\mu$ in~\eqref{eq:phi-1} matches that of~$\nu$ in~\eqref{eq:mot-phi-refor-1}. As $\nu$ was chosen arbitrarily, this implies that the optimal value of~\eqref{eq:phi-1} provides an upper bound on that of~\eqref{eq:mot-phi-refor-1}. In summary, we have thus shown that problems~\eqref{eq:phi-1} and~\eqref{eq:mot-phi-refor-1} share the same optimal value.

\noindent \textit{Step 2.}
We now recast~\eqref{eq:mot-phi-refor-1} as a problem over couplings $\pi\in\mc P((\mc V \times \mc W)^2)$.
\begin{equation}
    \begin{array}{ccll}
        & \max  & \EE_{\pi}\left[f(V,W)\right]\\
         &\suchthat & \pi \in \mc P((\mc V \times \mc W)^2)\\[-0.7ex]
        && \pi_{(\hat V, \hat W)} = \hat \nu \\ 
        && \EE_{ \pi}[W ] = 1\\
        &&  \EE_{\pi}[c((V,W),(\hat V,\hat W)) ] \leq r
    \end{array}
\label{eq:mot-phi-refor-2}
\end{equation}  
In analogy to {Step 1}, we first demonstrate that for any~$\nu$ feasible in~\eqref{eq:mot-phi-refor-1} there exists a~$\pi$ feasible in~\eqref{eq:mot-phi-refor-2} with the same objective value. In the following we interpret~$\hat \nu$ as a distribution of~$(\hat V, \hat W)$ on~$\mc V\times\mc W$. Select now any~$\nu$ feasible in~\eqref{eq:mot-phi-refor-1}, and construct~$\pi$ as follows. The marginal distribution of $(V, W)$ under~$\pi$ is set to~$\nu$, the conditional distribution of $\hat V$ given~$(V,W)$ under~$\pi$ is set to~$\delta_{V}$, and the conditional distribution of~$\hat W$ given $(V, W, \hat V)$ is set to $\hat{\nu}_{\hat W|\hat V}$. Clearly, we have $\pi \in \mc P((\mc V \times \mc W)^2)$. Next, select arbitrary Borel sets $A\subseteq\mc V$ and $B\subseteq\mc W$, and observe that
\begin{align*}
       \EE_\pi[\mathbf{1}_A(\hat V) \cdot \mathbf{1}_B(\hat W)] & =\int_{\mc V\times\mc W}\int_{A} \hat\nu_{\hat W|\hat V}(B|\hat v)\, \diff\delta_v(\hat v)\,\diff\nu(v,w)\\
       &= \int_{A\times\mc W}\hat\nu_{\hat W|\hat V}(B|\hat v)\, \,\diff\nu(\hat v,w)\\
       &= \int_{A}\hat\nu_{\hat W|\hat V}(B|\hat v)\, \,\diff\rho (\hat v) = \hat\nu(A\times B).
\end{align*}
Here, the first and the third equalities follow from the construction of~$\pi$ and the marginalization constraint~$\nu_Z=\rho$ in~\eqref{eq:mot-phi-refor-1}, respectively. The fourth equality exploits the definitions of~$\rho$ and~$\hat\nu$. As~$A$ and~$B$ were chosen freely, we thus have~$\pi_{(\hat V, \hat W)}=\hat\nu$. As $\pi_{(V, W)}=\nu$ and~$\nu$ is feasible in~\eqref{eq:mot-phi-refor-1}, we also have $\EE_{ \pi}[W ] = \EE_{\nu}[W] = 1$. Similarly,
\begin{align*}
    \EE_{\pi}[c((V,W),(\hat V,\hat W)) ] = \EE_{\pi}\left[ g(V) \cdot \phi\left(\frac{W}{g(V)}\right) \right]\leq r,
\end{align*}
where the equality follows from the definition of~$c$ and the observation that~$V=\hat V\in\hat{\mc Z}$ $\pi$-almost surely, while the inequality holds because $\tfrac{\diff\hat\mu}{\diff \rho}(V)=g(V)$ on the support of~$\rho$ and because~$\nu$ is feasible in~\eqref{eq:mot-phi-refor-1}. The objective function values of~$\nu$ and~$\pi$ in the respective problems are trivially equal by the construction of~$f$ and~$\pi$. As~$\nu$ was chosen arbitrarily, the optimal value of~\eqref{eq:mot-phi-refor-2} thus upper bounds that of~\eqref{eq:mot-phi-refor-1}.

Next, we show that any~$\pi$ feasible in~\eqref{eq:mot-phi-refor-2} corresponds to a~$\nu$ feasible in~\eqref{eq:mot-phi-refor-1} with the same objective function value. To this end, select any~$\pi$ feasible in~\eqref{eq:mot-phi-refor-2}, and define~$\nu$, which is viewed as a distribution of $(Z,W)$ in~\eqref{eq:mot-phi-refor-1}, as $\nu = \pi_{(\hat V,W)}$. Thus, we identify~$\nu$ with the distribution of $(\hat V,W)$ under~$\pi$. By construction, we thus have 
\[
    \nu_Z = \pi_{\hat V} = \hat \nu_{\hat V} = \rho,
\]
where the second equality holds because $\pi$ satisfies the marginalization constraint in~\eqref{eq:mot-phi-refor-2}, and the third equality exploits the definition of~$\hat\nu$. Thus, $\nu \in \mc P(\hat{\mc Z} \times \mc W)$ satisfies the marginalization constraint in~\eqref{eq:mot-phi-refor-1}. In addition, we have $\EE_{\nu} [W] = \EE_\pi[W] = 1$, where the second equality holds because~$\pi$ satisfies the normalization constraint in~\eqref{eq:mot-phi-refor-2}. Thus, $\nu$ satisfies the normalization constraint in~\eqref{eq:mot-phi-refor-1}. Similarly, we find
\begin{align*}
    \EE_{\nu}\left[ \frac{\diff \hat \mu}{\diff \rho}(Z)\cdot \phi\left( \frac{W}{\frac{\diff  \hat \mu}{\diff \rho}(Z)}\right) \right] & = \EE_{\nu}\left[ g(Z)\cdot \phi\left( \frac{W}{g(Z)}\right) \right]= \EE_\pi \left[ g(\hat V)\cdot \phi\left( \frac{W}{g(\hat V)}\right) \right] \\
    & = \EE_{\pi}[c((V,W),(\hat V,\hat W)) ] \leq r,
\end{align*}
where the first equality holds because $\frac{\diff \hat \mu}{\diff \rho}(z)$ is a measurable function of~$z$ that evaluates to $1/(1-\epsilon)$ if $z \in \{\hat{z}_i\}_{i=1}^n$. Thus, it coincides with~$g(z)$ almost surely with respect to~$\nu_Z=\rho$. The second equality holds because the distribution~$\nu$ of $(Z,W)$ coincides with the distribution of $(\hat V,W)$ under~$\pi$, and the third equality exploits the definition of~$c$ and the feasibility of~$\pi$ in~\eqref{eq:mot-phi-refor-2}, which guarantees that $\EE_{\pi}[c((V,W),(\hat V,\hat W)) ]$ is finite and thus implies that $V= \hat V$ $\pi$-almost surely. The inequality, finally, holds because $\pi$ satisfies the divergence constraint in~\eqref{eq:mot-phi-refor-2}. Hence, $\nu$ is feasible in~\eqref{eq:mot-phi-refor-1}. As the distribution~$\nu$ of $(Z,W)$ matches the distribution of $(\hat V,W)$ under~$\pi$, we also have
\[
    \EE_{\nu}[\ell(Z)\cdot W ]=\EE_{\pi}[f(\hat V,W) ]=\EE_{\pi}[f(V,W) ],
\]
where the second equality holds again because $V= \hat V$ $\pi$-almost surely. Thus, the objective value of~$\nu$ in~\eqref{eq:mot-phi-refor-1} matches that of~$\pi$ in~\eqref{eq:mot-phi-refor-2}. The optimal value of~\eqref{eq:mot-phi-refor-1} thus provides an upper bound on that of~\eqref{eq:mot-phi-refor-2}. In summary, we have shown that problems~\eqref{eq:mot-phi-refor-1} and~\eqref{eq:mot-phi-refor-2} share the same optimal value. This completes the proof.
\end{proof}

\begin{rmk}[Properties of $c$]
The cost function $c$ defined in~\eqref{eq:cost-phi-mot-equiv} satisfies
\begin{equation*}
\begin{aligned}
c((v,w),(\hat v, \hat w)) & = 
\begin{cases}
    \frac{1}{1-\epsilon} \cdot \phi((1-\epsilon) \cdot w) & \text{if}~\hat v  \in \{\hat{z}_i\}_{i=1}^{n}~\text{and}~v =\hat v, \\ 
    \phi'_\infty \cdot w & \text{if}~\hat v =\hat{z}_{n+1}~\text{and}~v= \hat v,\\ 
    +\infty & \text{otherwise}. 
\end{cases}
\end{aligned}
\end{equation*}
This representation reveals that $c((v,w),(\hat v, \hat w))$ is  lower semicontinuous and bounded below.  It also shows that $c((v,w),(\hat v, \hat w))$ vanishes if $(v, w) = (\hat v, \hat w)$ and if $(v,w)=(\hat z_i, (1-\epsilon)^{-1})$ for some $i\in [n]$ or $(v,w)=(\hat z_{n+1}, 0)$. These properties will be instrumental for the applicability of the strong duality theorem derived in Section~\ref{sec:duality}.
\end{rmk}

\subsection{Sinkhorn Discrepancy-Based DRO}
The Sinkhorn discrepancy between two distributions is defined as the optimal value of an entropy-regularized OT problem.
\begin{definition}[Sinkhorn discrepancy]
If $\mc Z \subseteq \R^{d_z}$ is a convex and closed set, $d:\mathcal{ Z \times Z}\rightarrow\R_+$ is a lower semicontinuous cost function satisfying the identity of indiscernibles~($d(z, \hat z) = 0$ if and only if $z =\hat z$), $\eta, \hat \eta \in \mc P(\mc Z)$ are arbitrary reference distributions and~$\epsilon \geq 0$ is a regularization parameter, then the Sinkhorn discrepancy induced by~$d$, $\eta$, $\hat \eta$ and~$\epsilon$ is the function $\mathds W_d^\epsilon: \mc P(\mc Z) \times \mc P(\mc Z)\rightarrow [0,+\infty]$ defined through
\[
    \mathds W_d^\epsilon(\mu, \hat \mu) = \inf_\pi \Big\{\EE_{\pi}[d(Z, \hat Z)+\varepsilon \mathds D_{\textrm{\em KL}}(\pi, \eta \otimes \hat \eta)] : \pi \in \mc P(\mc Z \times {\mc Z}), \,\pi_Z = \mu, \,\pi_{\hat Z} = \hat \mu\Big\},
\]
where 
$\mathds D_{\textrm{\em KL}}$ stands for the Kullback-Leibler divergence (see Definition~\ref{defi-phi} and Table~\ref{tab:phi-divergence}).
\label{def:sinkhorn}
\end{definition}

If $\epsilon=0$, then the Sinkhorn discrepancy collapses to the classical OT discrepancy induced by~$d$. If additionally $d(z,\hat z)=\|z-\hat z\|^p$ for some $p\in\mathbb N$, then the Sinkhorn discrepancy collapses to the $p$-th power of the $p$-Wasserstein metric \citep{ref:villani2009optimal}.


Next, we will study the Sinkhorn discrepancy-based DRO problem
\begin{equation}
    \sup\limits_{\mu \in \mc P(\mc Z)} \left\{ \EE_{\mu}[\ell(Z)] : \mathds W_d^\varepsilon(\mu, \hat \mu) \leq r  \right\},
    \label{eq:sinkhorn-dro}
\end{equation}
and we will show that~\eqref{eq:sinkhorn-dro} can be recast as an instance of the OT-DRO problem~\eqref{eq:mot-dro} with conditional moment constraints. Note that problem~\eqref{eq:sinkhorn-dro} becomes infeasible for small (positive) values of~$r$. The feasibility of~\eqref{eq:sinkhorn-dro} also depends on the choice of the reference distributions $\eta$ and $\hat\eta$. One can show that, for any fixed~$\mu$, $\hat\mu$ and~$\eta$, $\mathds W_d^\varepsilon(\mu, \hat \mu)$ is smallest if $\hat \eta = \hat \mu$. In order to minimize the radius of the ambiguity set below which~\eqref{eq:sinkhorn-dro} becomes infeasible, we will henceforth assume that~$\hat \eta = \hat \mu$. Specifically, in this case one can show  that~\eqref{eq:sinkhorn-dro} is feasible if and only if 
\[
    r\geq \underline r = - \varepsilon \cdot \EE_{\hat \mu} \left[ \log \left( \EE_{\eta}\left[ \exp\left(-d(Z,\hat Z)/\varepsilon \right) \right]\right) \right ];
\]
see~\cite{wang2021sinkhorn}. Note that the infeasibility threshold $\underline r$ on the right hand side of the above inequality is non-decreasing in $d$ and, as $d\geq 0$, thus nonnegative. The following assumption ensures that the Sinkhorn DRO problem~\eqref{eq:sinkhorn-dro} is well-defined.

\begin{assumption}[Assumption~1 in \cite{wang2021sinkhorn}]
\label{ass:sinkhorn-dro}
The following hold. 
\begin{enumerate}[label=\normalfont(\alph*)]
\item  We have $\eta(d(Z,\hat z)<\infty) = 1$ for $\hat\mu$-almost every $\hat z\in\mc Z$. 
\item We have $\EE_{ \eta}[ \exp(-d(Z,\hat z)/\varepsilon )]<\infty$ for $\hat\mu$-almost every $\hat z\in\mc Z$. 
\end{enumerate}
\end{assumption}

The following theorem shows that if $\mathds D=\mathds W_d^\epsilon$ is a Sinkhorn discrepancy, then the worst-case expectation problem in~\eqref{eq:dro} can be recast as an instance of problem~\eqref{eq:mot-dro}.

\begin{theorem}[Sinkhorn discrepancy-based DRO]
Let $(\ell, \mc Z, \hat \mu, \mathds D, r)$ be an instance of the worst-case expectation problem in~\eqref{eq:dro}, where $\mathds D=\mathds W_d^\epsilon$ is the Sinkhorn discrepancy induced by~$d$, $\eta$, $\hat \eta=\hat\mu$ and~$\epsilon>0$. Suppose that \Cref{ass:sinkhorn-dro} holds and $r\geq\underline r$. Set $\mathcal{V} = \mathcal{V}_1\times \mathcal{V}_2$ with $\mathcal{V}_1=\mathcal{V}_2=\mc Z$, $\mathcal{W}=\mathbb{R}_+$, ${\mc G} = \sigma(\hat V_2)$,  $f(v, w) =\ell(v_1) \cdot w$~and
\begin{equation}
    c((v, w), (\hat v, \hat w)) = \left\{ \begin{array}{ll}
        \varepsilon \cdot \max\{ \phi_{\textrm{\em KL}}(w)-\phi_{\textrm{\em KL}}(\hat w),0\} & \text{if $v=\hat v$}, \\
        \infty & \text{if $v\neq \hat v$,}
    \end{array}\right.
\end{equation}
where $\phi_{\textrm{\em KL}}$ denotes the entropy function of the Kullback-Leibler divergence; see Table~\ref{tab:phi-divergence}. In addition, set the nominal distribution of $(\hat V, \hat W)$ to $\hat{\nu} = \hat{\gamma} \otimes \delta_1$, where the distribution~$\hat \gamma$ of $\hat V=(\hat V_1,\hat V_2)$ is constructed as follows. The marginal distribution of~$\hat V_2$ under~$\hat\gamma$ is given by~$\hat \gamma_{\hat V_2}= \hat\mu$, and the distribution of~$\hat V_1$ conditional on~$\hat V_2=\hat v_2$ is absolutely continuous with respect to~$\eta$ with Radon-Nikodym derivative
\begin{equation*}
    \frac{\diff \hat \gamma_{\hat V_1|\hat V_2=\hat v_2}}{\diff \eta} (\hat v_1) = \frac{ \exp\left(-d(\hat v_1,\hat v_2)/\varepsilon \right)}{\EE_{\eta} \left[\exp(-d(\hat V_1,\hat v_2)/\varepsilon )\right]} . 
\end{equation*}
Then, the inner worst-case expectation problem in~\eqref{eq:dro} has the same optimal value as the instance $(f, \mc V, \mc W, \mc G, \hat \nu, c, r-\underline r)$ of problem~\eqref{eq:mot-dro}. 
\label{theorem:sinkhorn-mot}
\end{theorem}

\begin{proof}
As $\mathds D=\mathds W_d^\epsilon$, problem~\eqref{eq:dro} is equivalent to~\eqref{eq:sinkhorn-dro}. By \cite[Lemma~2]{wang2021sinkhorn}, which applies thanks to \Cref{ass:sinkhorn-dro}, and as $\hat\gamma_{\hat Z}=\hat \mu$, we can then reformulate~\eqref{eq:sinkhorn-dro} as
\begin{equation}
    \begin{array}{ccll}
         & \sup  & \EE_{ \gamma}[\ell(Z)]\\
         &\suchthat & \gamma \in \mc P(\mc Z \times \mc Z),~\gamma\ll\hat\gamma\\ [-0.5ex]
         && {\gamma}_{\hat Z} = \hat \mu\\
         && \varepsilon \cdot \EE_{\gamma}\left[ \log\left(\frac{\diff \gamma}{\diff \hat\gamma }(Z,\hat{Z}) \right) \right] \leq r-\underline r,
    \end{array}
\label{eq:mot-sinkhorn-refor-1}
\end{equation} 
where $\underline r$ denotes the infeasibility threshold of problem~\eqref{eq:sinkhorn-dro}. 
The rest of the proof parallels that of \Cref{theorem:phi-div-mot}. By introducing a random variable~$W$ for the likelihood ratio $\frac{\diff \gamma}{\diff \hat\gamma }(Z,\hat{Z})$ we will first recast~\eqref{eq:mot-sinkhorn-refor-1} as an optimization problem over probability distributions in $\mc P(\mc Z \times \mc Z \times \mc W)$ (Step~1). Next, we will lift this reformulation to an optimization problem over couplings in $\mc P((\mc V \times \mc W)^2)$ (Step~2).

\noindent\textit{Step~1.} We first recast problem~\eqref{eq:mot-sinkhorn-refor-1} as an optimization problem over all probability distributions $\bar{\gamma} \in \mathcal{P}(\mc Z \times \mc Z\times \mc W)$, where $\mathcal{W} = \mathbb{R}_+$ is the space of likelihood ratios.
\begin{equation}
    \begin{array}{ccll}
         & \sup  & \EE_{\bar \gamma}\left[\ell(Z)\cdot W\right]\\
         &\suchthat & \bar \gamma \in \mc P(\mc Z \times \mc Z \times W)\\[-0.5ex]
         && {\bar \gamma}_{(Z,\hat Z)} = \hat \gamma  \\ 
         && \EE_{\bar \gamma} [W|\hat Z] = 1 \quad \hat \mu \text{-a.s.}\\
        && \varepsilon  \cdot \EE_{ \bar \gamma} \left[\phi_{\textrm{KL}}(W) \right] \leq r-\underline r. 
    \end{array}
\label{eq:mot-sinkhorn-refor-2}
\end{equation} 
We will first show that any~$\gamma$ feasible in~\eqref{eq:mot-sinkhorn-refor-1} gives rise to a $\bar \gamma$ feasible in~\eqref{eq:mot-sinkhorn-refor-2} with the same objective value. Thus, choose any distribution~$\gamma$ of $(Z, \hat Z)$ feasible in~\eqref{eq:mot-sinkhorn-refor-1}, and define the distribution $\bar\gamma$ of $(Z, \hat Z, W)$ as follows. The marginal distribution of $(Z,\hat Z)$ under~$\bar\gamma$ is set to $\hat\gamma$, and the conditional distribution of~$W$ given~$(Z,\hat Z)$ under~$\bar\gamma$ is set to the Dirac distribution at~$\tfrac{\diff \gamma}{\diff \hat\gamma }(Z,\hat{Z})$. By construction, $\bar\gamma$ belongs to $\mc P(\mc Z \times \mc Z \times W)$ and satisfies the marginalization constraint in~\eqref{eq:mot-sinkhorn-refor-2}. We further have
\begin{equation*}
    \label{eq:sinkhorn-equ-1}
    \begin{aligned}
    \EE_{\bar \gamma} [W |\hat Z ] & = \EE_{ \bar \gamma }\left[\left. \EE_{ \bar \gamma }[ W  |Z,\hat Z ] \right| \hat Z\right]= 
    \EE_{ \hat \gamma }\left[\left.\tfrac{\diff \gamma}{\diff \hat\gamma }(Z,\hat{Z})\right| \hat Z\right] \\
    &= \EE_{ \hat \gamma }\left[\left. \tfrac{\diff \gamma_{Z|\hat{Z}}}{\diff \hat \gamma_{Z|\hat{Z}}}(Z|\hat Z) \right| \hat Z\right] =1 \quad \hat \gamma_{\hat Z}\text{-a.s}.,
    \end{aligned}
\end{equation*}
where the second equality follows from the construction of $\bar\gamma$, the third equality holds because the marginal distribution of~$\hat Z$ is given by~$\hat\mu$ both under~$\gamma$ and~$\hat \gamma$, and the fourth equality follows from the Radon-Nikodym theorem. Thus, $\bar\gamma$ satisfies the normalization constraint in~\eqref{eq:mot-sinkhorn-refor-2}. Using a similar reasoning, we further obtain
\begin{align*}
    \varepsilon\cdot \EE_{\bar \gamma} [\phi_{\textrm{KL}}(W)] & =\varepsilon\cdot \EE_{ \bar \gamma }\left[ \EE_{ \bar \gamma }[ \phi_{\textrm{KL}}(W)  |Z,\hat Z ] \right] = \varepsilon\cdot \EE_{ \hat \gamma }\left[ \phi_{\textrm{KL}}\left(\tfrac{\diff \gamma}{\diff \hat \gamma}(Z,\hat Z)\right) \right] \\
    & = \varepsilon\cdot \EE_{\gamma}\left[ \log\left(\tfrac{\diff \gamma}{\diff \hat\gamma}(Z,\hat{Z})\right) \right]\leq r-\underline r,
\end{align*}
where the second equality follows from the construction of~$\bar \gamma$, while the third equality exploits the definition of $\phi_{\textrm{KL}}(t)=t \cdot \log(t) -t +1$ and the Radon-Nikodym theorem, and the inequality holds because~$\gamma$ satisfies the last constraint in~\eqref{eq:mot-sinkhorn-refor-1}. Hence, $\bar \gamma$ satisfies the last constraint in~\eqref{eq:mot-sinkhorn-refor-2}. Similarly, we find
\begin{align*}
    \EE_{\bar \gamma}\left[\ell(Z)\cdot W\right] =\EE_{ \bar \gamma }\left[\ell(Z) \cdot \EE_{ \bar \gamma }[ W |Z,\hat Z ]\right] = \EE_{ \hat \gamma }\left[ \ell(Z) \cdot\tfrac{\diff \gamma}{\diff \hat\gamma }(Z,\hat{Z})\right] 
    = \EE_{\gamma}[\ell(Z)],
\end{align*}
where the last equality exploits again the observation that~$\hat Z$ has the same marginal distribution under~$\gamma$ and~$\hat \gamma$. Hence, the objective value of~$\gamma$ in~\eqref{eq:mot-sinkhorn-refor-1} matches that of~$\bar\gamma$ in~\eqref{eq:mot-sinkhorn-refor-2}. The optimal value of~\eqref{eq:mot-sinkhorn-refor-2} thus upper bounds that of~\eqref{eq:mot-sinkhorn-refor-1}.

Next, we show that for any~$\bar \gamma$ feasible in~\eqref{eq:mot-sinkhorn-refor-2} there is~$\gamma$  feasible in~\eqref{eq:mot-sinkhorn-refor-1} with the same objective function value. To see this, select any~$\bar \gamma$ feasible in~\eqref{eq:mot-sinkhorn-refor-2}, and construct~$\gamma$ as the distribution that is absolutely continuous with respect to~$\hat \gamma$ with Radon-Nikodym derivative $\frac{\diff\gamma}{\diff\hat\gamma}(Z,\hat Z)=\EE _{\bar \gamma}[W|Z,\hat{Z}]$. One readily verifies that
\[
    \EE_{\hat\gamma} \left[\tfrac{\diff\gamma}{\diff\hat\gamma}(Z,\hat Z)\right] = \EE_{\bar\gamma} \left[\tfrac{\diff\gamma}{\diff\hat\gamma}(Z,\hat Z)\right] = \EE_{\bar\gamma} \left[\EE _{\bar \gamma}[W|Z,\hat{Z}]\right] =1,
\]
where the first equality holds because $\bar\gamma_{(Z,\hat Z)}=\hat\gamma$ thanks to the marginalization constraint in~\eqref{eq:mot-sinkhorn-refor-2}, and the third equality exploits the normalization constraint in~\eqref{eq:mot-sinkhorn-refor-2}. Thus, $\gamma$ belongs to~$\mc P(\mc Z \times \mc Z)$. Similarly, for any Borel set $B\subseteq \mc Z$ we have
\[
    \gamma_{\hat Z}(B) = \EE_\gamma[\mathbf{1}_B(\hat Z)] = \EE_{\hat \gamma}[\mathbf{1}_B(\hat Z)\cdot \EE_{\bar \gamma}[W|Z,\hat Z]] = \EE_{\bar \gamma}[\mathbf{1}_B(\hat Z)\cdot W]= \EE_{\bar \gamma}[\mathbf{1}_B(\hat Z)] =\hat \mu (B).
\]
Here, the first and the fourth equalities exploit again the marginalization and normalization constraints in~\eqref{eq:mot-sinkhorn-refor-2}, respectively. We thus find ${\gamma}_{\hat Z} = \hat \mu$.
Next, we have
\begin{align*}
    & \varepsilon \cdot  \EE_{\gamma}\left[ \log\left(\tfrac{\diff \gamma(Z,\hat{Z}) }{\diff \hat\gamma (Z,\hat{Z})}\right) \right] = \varepsilon \cdot  \EE_{\hat \gamma}\left[ \EE_{\bar\gamma}[W | Z , \hat Z]\cdot \log\left(\tfrac{\diff \gamma(Z,\hat{Z}) }{\diff \hat\gamma (Z,\hat{Z})}\right) \right] \\ = \;& \varepsilon \cdot  \EE_{\bar \gamma}\left[ \EE_{\bar\gamma}[W | Z , \hat Z]\cdot \log\left(\EE_{\bar\gamma}[W | Z , \hat Z] \right) \right] =  \varepsilon \cdot \EE_{\bar \gamma}\left[ \phi_{\textrm{KL}}\left(\EE_{\bar \gamma}[W|Z,\hat{Z}] \right)\right] & \\ 
    \leq \; & \varepsilon\cdot  \EE_{\bar \gamma}\left[ \phi_{\textrm{KL}}\left(W \right)\right] \leq r-\underline r,
\end{align*}
where the first equality follows from the Radon-Nikodym theorem, the second equality holds because $\bar\gamma_{(Z,\hat Z)}=\hat\gamma$ and $\frac{\diff\gamma}{\diff\hat\gamma}(Z,\hat Z)=\EE _{\bar \gamma}[W|Z,\hat{Z}]$, and the third equality exploits the definition of~$\phi_{\textrm{KL}}$ and the observation that $\EE_{\bar \gamma}[W]=1$. The two inequalities then follow from Jensen's inequality and the last constraint in~\eqref{eq:mot-sinkhorn-refor-2}, respectively. Thus, $\gamma$ satisfies the last constraint in~\eqref{eq:mot-sinkhorn-refor-1}. Using a similar reasoning, we finally find
\begin{align*}
    \EE_{\gamma}[\ell(Z)] &= \EE_{\hat \gamma}\left[ \ell(Z)\cdot \EE_{\bar \gamma}[W | Z , \hat Z] \right]= \EE_{\bar \gamma}\left[ \ell(Z)\cdot \EE_{\bar \gamma}[W | Z , \hat Z] \right] = \EE_{\bar \gamma}[\ell (Z) \cdot W].
\end{align*}
Hence, the objective value of~$\gamma$ in~\eqref{eq:mot-sinkhorn-refor-1} matches that of~$\bar\gamma$ in~\eqref{eq:mot-sinkhorn-refor-1}. This implies that the optimal value of~\eqref{eq:mot-sinkhorn-refor-1} provides an upper bound on that of~\eqref{eq:mot-sinkhorn-refor-2}. In summary, we have thus shown that problems~\eqref{eq:mot-sinkhorn-refor-1} and~\eqref{eq:mot-sinkhorn-refor-2} share the same optimal value.

\noindent \textit{Step~2.} We now recast~\eqref{eq:mot-sinkhorn-refor-2} as a problem over couplings $\pi \in\mc P((\mc V \times \mc W)^2)$.
\begin{equation}
    \begin{array}{ccll}
        & \max  & \EE_{\pi}\left[f(V,W)\right]\\
         &\suchthat & \pi \in \mc P((\mc V \times \mc W)^2)\\[-0.7ex]
        && \pi_{(\hat V, \hat W)} = \hat \nu \\ 
        && \EE_{ \pi}[W|\hat V_2 ] = 1\\
        &&  \EE_{\pi}[c((V,W),(\hat V,\hat W)) ] \leq r-\underline r.
    \end{array}
\label{eq:mot-sinkhorn-refor-3}
\end{equation}
To show the equivalence of~\eqref{eq:mot-sinkhorn-refor-2} and~\eqref{eq:mot-sinkhorn-refor-3}, we use essentially the same reasoning as in Step~2 in the proof of Theorem \ref{theorem:phi-div-mot}. Details are omitted to avoid redundancy. 
\end{proof}
Note that for $\varepsilon=0$, problem \eqref{eq:sinkhorn-dro} simplifies to a standard OT-DRO problem {\em without} conditional moment constraints. This problem is trivially an instance of~\eqref{eq:mot-dro}.

\begin{theorem}[OT discrepancy-based DRO]
\label{theorem:wass_mot}
Let $(\ell, \mc Z, \hat \mu, \mathds D, r)$ be an instance of the worst-case expectation problem in~\eqref{eq:dro}, where $\mathds D=\mathds W_d^0$ is the OT discrepancy induced by~$d$. Define $\mathcal{V} = \mc Z$, $\mathcal{W}=\mathbb{R}_+$, ${\mc G} = \{\emptyset,(\mc V\times\mc W)^2 \}$,  $f(v, w) =\ell(v) \cdot w$ and 
\[
 c((v,w),(\hat v,\hat w))=\left\{ \begin{array}{ll}
      d(v,\hat v) & \text{if }w=\hat w, \\
      \infty & \text{if }w\neq \hat w.
 \end{array} \right.
\]
In addition, set the nominal distribution of $(\hat V, \hat W)$ to $\hat{\nu} = \hat{\mu} \otimes \delta_1$. Then, the worst-case expectation problem in~\eqref{eq:dro} has the same optimal value as~\eqref{eq:mot-dro}.
\end{theorem}

\begin{proof}
Problem~\eqref{eq:dro} is equivalent to~\eqref{eq:sinkhorn-dro} with $\varepsilon =0$ and can be reformulated as
\begin{equation}
\begin{array}{ccll}
    & \sup & \mathbb{E}_{\gamma}[\ell(Z)] \\
    &\suchthat &\gamma \in \mathcal{P}(\mathcal{Z}\times \mathcal{Z})\\[-0.5ex]
    && \gamma_{\hat{Z}} = \hat{\mu} \\
    &&\mathbb{E}_{\gamma}[d(Z,\hat{Z})]\leq r
\end{array}
\label{eq:mot-ot-refor-2}
\end{equation}
by using~\cite[Theorem~4.1]{ref:villani2009optimal}. The rest of the proof is elementary and thus omitted.
\end{proof}

\section{Strong Duality}
\label{sec:duality}
We will now show that if~$\mc G = \sigma(\hat V)$, then the OT-DRO problem~\eqref{eq:mot-dro-extended} with conditional moment constraints admits the following strong dual.
\begin{equation}
\label{eq:dual-mot}
  \inf\limits_{\substack{\lambda \in \R_+ \\ \psi\in \mc  C_b(\mc V)}}  \lambda r +
\EE_{\hat \nu}\left [\sup\limits_{(v,w) \in \mc V \times \mc W } f(v, w) + \psi(\hat V)\cdot (w-1) -\lambda c((v, w), (\hat V, \hat W))\right ]
\end{equation}
Here, $\mc C_b (\mc V)$ denotes the family of all bounded continuous functions on~$\mc V$. This duality result will enable us to solve interesting instances of the proposed OT-DRO problem~\eqref{eq:mot-dro} with conditional moment constraints. In addition, the dual problem~\eqref{eq:dual-mot} will provide insights into the structure of the worst-case distribution that solves the primal problem~\eqref{eq:mot-dro-extended}. Our duality result relies on the following  assumption.
\begin{assumption}[Conditions for strong duality]
\label{ass:strong_duality}
\begin{enumerate}[label=\normalfont(\alph*)]
\item The cost function $c: (\mc V \times \mc W)^2 \rightarrow (-\infty, +\infty]$ is lower semicontinuous and satisfies $c((v,w),(\hat v,\hat w)) =0$ whenever $(v,w) = (\hat v,\hat w)\in \text{\rm supp}(\hat\nu)$. 
\item The loss function $f:\mc V\times\mc W\rightarrow [-\infty, +\infty)$ is upper semicontinuous and satisfies $\EE_{\hat \nu}[|f(\hat V, \hat W)|]<\infty$. 
\item The support sets $\mc V\subseteq \R^{d_v}$ and $\mc W\subseteq \R_+$ are compact.
\end{enumerate}
\end{assumption}

\begin{theorem}[Strong duality]
\label{thm:strong-duality}
If \Cref{ass:strong_duality} holds, then the supremum of the primal problem~\eqref{eq:mot-dro-extended} equals the infimum of the dual problem~\eqref{eq:dual-mot}.
\end{theorem}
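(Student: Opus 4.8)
The plan is to prove strong duality for the optimal transport problem \eqref{eq:mot-dro-extended} with the extra conditional moment constraint $\EE_\pi[W\mid\mc G]=1$, treating $\mc G=\mc F_{\hat V}$. The natural strategy is a two-step Lagrangian argument. First I would dualize \emph{only} the conditional moment constraint and the transport-budget constraint, keeping the marginal constraint $\pi_{(\hat V,\hat W)}=\hat\nu$ explicit; this reduces the problem to a family of ordinary (marginal-constrained) optimal transport problems indexed by the multipliers. Since $\mc G=\mc F_{\hat V}$, the constraint $\EE_\pi[W\mid\mc G]=1$ is an \emph{uncountable family of linear constraints} indexed by (the $\hat\nu$-essential range of) $\hat V$, and its Lagrange multiplier is naturally a measurable function $\alpha(\hat V)$ rather than a scalar — hence the appearance of $\alpha\in\mc C_b(\mc V)$ in \eqref{eq:dual-mot}. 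Writing the Lagrangian
\[
L(\pi,\alpha,\lambda)=\EE_\pi[f(V,W)] + \EE_\pi\!\big[\alpha(\hat V)(W-1)\big] + \lambda\big(r-\EE_\pi[c((V,W),(\hat V,\hat W))]\big),
\]
weak duality $\mathrm{(P)}\le\mathrm{(D)}$ is immediate: for feasible $\pi$ the added terms are nonnegative, and then one maximizes over $\pi$ with fixed $\hat\nu$-marginal, which decouples fiberwise over $(\hat V,\hat W)$ and produces exactly the inner supremum $\sup_{(v,w)} f(v,w)+\alpha(\hat V)(w-1)-\lambda c((v,w),(\hat V,\hat W))$ integrated against $\hat\nu$.

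The substance is the reverse inequality. Here I would invoke a minimax / Sion-type argument or, more robustly, a Rockafellar-style conjugate-duality argument in the space of finite signed measures on the compact set $(\mc V\times\mc W)^2$ paired with $\mc C_b$. Concretely: define the perturbation function $V(y_1,y_2)$ as the optimal value of \eqref{eq:mot-dro-extended} when the constraints $\EE_\pi[c(\cdot)]\le r$ and $\EE_\pi[(W-1)\mid\hat V]=0$ are relaxed to $\le r+y_1$ and $=y_2(\hat V)$ respectively, show $V$ is concave and upper semicontinuous in an appropriate topology, and identify $\eqref{eq:dual-mot}$ as $V^{**}(0,0)$; then strong duality is the statement $V(0,0)=V^{**}(0,0)$, which holds once $V$ is proper and u.s.c.\ at the origin. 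The topological inputs are supplied by Assumption~\ref{ass:strong_duality}: compactness of $\mc V\times\mc W$ makes $\mc P((\mc V\times\mc W)^2)$ weak-$*$ compact, lower semicontinuity of $c$ and upper semicontinuity of $f$ make the relevant functionals u.s.c.\ on this compact set, and $c((v,w),(v,w))=0$ together with $f\in\mc L_1(\d\hat\nu)$ guarantee the trivial coupling $\pi=(\mathrm{id},\mathrm{id})_\#\hat\nu$ is feasible, so $\mathrm{(P)}>-\infty$ and no duality gap can open from infeasibility.

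A cleaner route, and the one I would actually write up, is to reduce to the classical OT strong-duality theorem as a black box. Fix $\lambda\ge0$ and a continuous $\alpha$, and consider the \emph{single} transport problem $\sup\{\EE_\pi[f(V,W)+\alpha(\hat V)(W-1)-\lambda c]:\pi_{(\hat V,\hat W)}=\hat\nu\}$; because only one marginal is fixed, this is an elementary "fiberwise" supremum equal to $\EE_{\hat\nu}[\sup_{(v,w)}(\cdots)]$, measurability of the selection following from a measurable maximum theorem on the compact fibers. Then I would show
\[
\mathrm{(P)} \;=\; \inf_{\lambda\ge0}\ \inf_{\alpha\in\mc C_b(\mc V)}\ \Big\{\lambda r + \EE_{\hat\nu}\big[\textstyle\sup_{(v,w)}f(v,w)+\alpha(\hat V)(w-1)-\lambda c\big]\Big\}
\]
by a minimax exchange: the map $\pi\mapsto$ objective is linear and weak-$*$ u.s.c.\ on the weak-$*$ compact convex set $\{\pi:\pi_{(\hat V,\hat W)}=\hat\nu\}$, and $(\alpha,\lambda)\mapsto$ Lagrangian is convex (affine), so Sion's minimax theorem applies. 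The one delicate point — and the step I expect to be the main obstacle — is justifying that the multiplier for $\EE_\pi[W\mid\hat V]=1$ can be taken in $\mc C_b(\mc V)$ rather than merely in $L^1(\hat\nu)$ or $L^\infty(\hat\nu)$: this requires either an approximation argument (truncate and mollify a general measurable multiplier, using compactness of $\mc V$ and $W$ to control the error in the inner supremum) or restricting attention to $\hat\nu$ with finite/discrete $\hat V$-marginal — which is in fact the only case used in Section~\ref{sec:mot-express}, where $\hat V$ is supported on $n$ points and $\mc C_b(\mc V)$-multipliers are automatically without loss — and then passing to the limit. I would handle the general compact case by this density/approximation route, taking care that the $\sup_{(v,w)}$ inside the expectation remains $\hat\nu$-measurable and integrable throughout, the latter again from $f\in\mc L_1(\d\hat\nu)$ and boundedness of $\alpha$ and of $W$ on the compact $\mc W$.
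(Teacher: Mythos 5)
Your preferred route --- Sion's minimax theorem applied to the Lagrangian $L(\pi;\lambda,\alpha)$ over the weak-$*$ compact convex set $\{\pi:\pi_{(\hat V,\hat W)}=\hat\nu\}$, with the fiberwise/interchangeability decoupling for the inner supremum --- is exactly the paper's proof, and your worry about whether $\mc C_b(\mc V)$-multipliers suffice is resolved in the paper precisely by the approximation argument you sketch (indicators of Borel sets approximated by continuous functions, plus dominated convergence, to recover $\EE_\pi[W\mid\hat V]=1$ from $\EE_\pi[\alpha(\hat V)(W-1)]=0$ for all $\alpha\in\mc C_b(\mc V)$). The proposal is correct and essentially identical to the paper's argument.
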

\begin{proof}
Define the Lagrangian function
\begin{equation*}
    L(\pi;\lambda,\psi) = \lambda r + \EE_{\pi}[f(V, W)+\psi(\hat V)\cdot(W-1)-\lambda c((V, W), (\hat V, \hat W))],
\end{equation*}
where the Lagrange multipliers~$\lambda \in \R_+$ and $\psi \in \mc C_b(\mc V)$ are associated with the mass transportation and the moment constraints in problem~\eqref{eq:mot-dro-extended}, respectively, while the primal decision variable~$\pi$ ranges over the feasible set 
\[
    \Pi_{\hat \nu}= \left\{\pi \in \mc P((\mc V\times \mc W)^2) \; :\; \pi_{(\hat V, \hat W)}  = \hat \nu \right\}.
\]
We will use Sion's minimax theorem to show that
\begin{equation}
    \label{eq:sion}
    \sup_{\pi\in\Pi_{\hat \nu}} \inf_{\lambda \in \R_+, \psi \in \mc C_b(\mc V) } L(\pi;\lambda,\psi) = \inf_{\lambda \in \R_+, \psi \in \mc C_b(\mc V)}\sup_{\pi\in\Pi_{\hat \nu}}L(\pi;\lambda,\psi). 
\end{equation}
Before that, however, we first demonstrate that the max-min problem on the left-hand side of~\eqref{eq:sion} is equivalent to the primal problem~\eqref{eq:mot-dro-extended} and that the min-max problem on the right-hand side of~\eqref{eq:sion} is equivalent to the dual problem~\eqref{eq:dual-mot}. To establish the equivalence of~\eqref{eq:mot-dro-extended} and the max-min problem in~\eqref{eq:sion}, note that
\begin{align*}
    L(\pi;\lambda,\psi)  = \EE_{\pi}[f(V, W)] +\EE_{\pi}[\psi(\hat V)\cdot(W-1)] + \lambda\left (  r- \EE_{\pi}[ c((V, W), (\hat V, \hat W))] \right). 
\end{align*}
In addition, recall that $\mc W$ is compact, which ensures that the mapping
\begin{align*}
    m(\psi)= \EE_{\pi}[\psi(\hat V)\cdot(W-1)] & = \EE_{\pi}[\psi(\hat V)\cdot \EE_{\pi}[W-1|\hat V]]
\end{align*}
constitutes a continuous linear functional on~$\mc C_b(\mc V)$ under the uniform topology. By~\cite[Corollary~14.15]{aliprantis2006infinite}, which applies because~$\mc V$ is compact, there is a one-to-one correspondence between the continuous linear functionals on~$\mc C_b(\mc V)$ and the signed Borel measures on~$\mc V$. That is, $m(\psi)=\EE_\rho[\psi(\hat V)]$ for a unique signed Borel measure~$\rho$, and
\[
    \inf_{\psi\in\mc C_b(\mc V)} m(\psi)=\left\{ \begin{array}{cl}
        0 & \text{if }\rho=0, \\
        -\infty & \text{if }\rho\neq 0.
    \end{array}\right.
\]
From the construction of~$m$ it is clear that~$\rho(B)=\EE_{\pi}[\mathbbm 1_B(\hat V)\cdot \EE_{\pi}[W-1|\hat V]]$ for every Borel set~$B\subseteq\mc V$. We thus conclude that~$\rho$ is the zero measure if and only if~$\EE_{\pi}[W-1|\hat V]=0$ $\pi$-almost surely. This insight readily implies that 
\begin{equation*}
    \inf_{\lambda \in \R_+, \psi \in \mc C_b(\mc V) } L(\pi;\lambda,\psi) = \left\{ \!\! \begin{array}{ll}
    \EE_{\pi}[f(V, W)] & \text{if }\left\{\begin{array}{l}\EE_\pi[W|\hat V]=1~\pi\text{-a.s.\ and} \\ \EE_{\pi}[ c((V, W), (\hat V, \hat W))]\leq r,\end{array} \right.\\
    -\infty & \text{otherwise.}
    \end{array}\right.
\end{equation*}
Thus, the left-hand side of~\eqref{eq:sion} is indeed equivalent to the primal problem~\eqref{eq:mot-dro-extended}.

Next, we show that the min-max problem on the right-hand side of~\eqref{eq:sion} is equivalent to the dual problem~\eqref{eq:dual-mot}. Clearly, the right-hand side of~\eqref{eq:sion} lower bounds~\eqref{eq:dual-mot}. To establish the reverse inequality, we fix any~$\lambda\in\R_+$ and~$\psi\in\mc C_b(\mc V)$ and construct~$\pi\in\Pi_{\hat \nu}$ such that $L(\pi;\lambda,\psi)$ equals the objective value of~$\lambda$ and~$\psi$ in~\eqref{eq:dual-mot}. This can be achieved by constructing a Borel measurable map $s:\mc V\times\mc W\rightarrow \mc V\times\mc W$ with
\[
    s(\hat v,\hat w)\;\in\;\mathop{\arg\max}_{(v,w)\in\mc V\times\mc W}f(v, w) + \psi(\hat v)\cdot (w-1) -\lambda c((v, w), (\hat v, \hat w)) \quad \forall (\hat v,\hat w)\in \mc V\times\mc W.
\]
Note that the above maximization problem has a compact feasible set, and its objective function is jointly upper semicontinuous in the decisions~$(v,w)$ and the parameters~$(\hat v,\hat w)$. Hence, the argmax multifunction is weakly measurable as well as non-empty and closed-valued. The map~$s$ thus exists by the Kuratowski-Ryll-Nardzewski selection theorem \citep[Theorem~18.13]{aliprantis2006infinite}. Next, we construct~$\pi\in\Pi_{\hat \nu}$ such that the conditional distribution of~$(V,W)$ given~$(\hat V, \hat W)$ equals the Dirac distribution $\delta_{s(\hat V,\hat W)}$. In this case, $L(\pi;\lambda,\psi)$ matches the objective value of~$\lambda$ and~$\psi$ in~\eqref{eq:dual-mot}, implying that the right-hand side of~\eqref{eq:sion} is indeed equivalent to the dual problem~\eqref{eq:dual-mot}.

We now use Sion's minimax theorem to prove~\eqref{eq:sion}. Note that~$\Pi_{\hat \nu}$ is both weakly compact and convex. Indeed, $\Pi_{\hat \nu}$ is weakly closed because it coincides with the pre-image of the weakly closed singleton set $\{\hat\nu\}$ under the mapping $\pi\mapsto \pi_{(\hat V, \hat W)}$, which is weakly continuous by~\cite[Theorem~15.14]{aliprantis2006infinite}. In addition, as~$\mc V$ and~$\mc W$ are compact, the distribution family $\mc P((\mc V\times \mc W)^2)$ as well as its subset~$\Pi_{\hat \nu}$ are tight. Prokhorov's theorem  \cite[Theorem~2.4]{van2000asymptotic} thus ensures that~$\Pi_{\hat \nu}$ is weakly compact. The convexity of~$\Pi_{\hat \nu}$ is easy to verify. Note also that~$\mc C_b(\mc V)$ is convex and that the Lagrangian~$L(\pi;\lambda,\psi)$ is bilinear in~$\pi$ and~$(\lambda,\psi)$. In addition, $L(\pi;\lambda,\psi)$ us weakly upper semicontinuous in~$\pi$ by \cite[Proposition~3.3]{Kuhn_Shafiee_Wiesemann_2025}, which applies because~$c$ is lower semicontinuous, $f$ is upper semicontinuous and $\psi$ is continuous, while~$\mc V$ and~$\mc W$ are compact.

For Sion's minimax theorem to apply, it remains to be shown that~$L(\pi;\lambda,\psi)$ is continuous in~$(\lambda,\psi)$ with respect to the uniform topology on~$\R_+ \times \mc C_b (\mc V)$. To show this, let $\{\lambda_n\}_{n\in\mathbb N}$ be any sequence in~$\mathbb R_+$ converging to~$\lambda$ in the usual sense, and let $\{\psi_n\}_{n\in\mathbb N}$ be any sequence in $\mc C_b(\mc V)$ converging to~$\psi$ with respect to the uniform topology. Thus, there exist~$\bar \lambda ,\bar \psi \in \R_+$ with $\sup_{n \to \infty} |\lambda_n|\leq \bar \lambda $ and  $\sup_{v\in \mc V} \|\psi_n(v)\| <\bar \psi$ for all $n\in\mathbb N$. The dominated convergence theorem then ensures that
\[
    \mathop{\lim}_{n\rightarrow +\infty} L(\pi;\lambda_n,\psi_n)= L(\pi;\lambda,\psi).
\]
This means that the Lagrangian $L(\pi; \lambda, \psi)$ is continuous in~$(\lambda, \psi)$ with respect to the uniform topology on the product space~$\R_+\times \mc C_b(\mc V)$.

Having checked all underlying technical conditions, we may now conclude that strong duality between~\eqref{eq:mot-dro-extended} and~\eqref{eq:dual-mot} follows indeed from Sion's minimax theorem.
\end{proof}

\begin{rmk}[Other $\sigma$-algebras]
    \label{rem:trivial-sigma-algebra}
    If $\mathcal G = \sigma(P\hat V)$ for some projection~$P$ on~$\R^{d_v}$, then $\mathcal C_b(\mathcal V)$ in the dual problem~\eqref{eq:dual-mot} must be replaced with the family of all bounded continuous functions of~$P\hat V$. In this case, one can show that \Cref{ass:strong_duality} still implies strong duality. If $P=0$, then this function class is isomorphic to~$\R$.
\end{rmk}

In conventional OT-DRO {\em without} conditional moment constraints, a compactness condition akin to \Cref{ass:strong_duality}\,(c) is not needed to establish strong duality; see, {\em e.g.},~\cite{zhang2022simple}. Unfortunately, we conjecture that \Cref{thm:strong-duality} ceases to hold if \Cref{ass:strong_duality}\,(c) is relaxed. Indeed, the conditional moment condition in~\eqref{eq:mot-dro-extended} gives rise to a continuum of constraints unless the $\sigma$-algebra~$\mc G$ is finitely generated. This makes OT-DRO with conditional moment constraints fundamentally harder than conventional OT-DRO and poses a major technical challenge for establishing strong duality. In addition, the conditional moment constraints in~\eqref{eq:mot-dro-extended} are reminiscent of martingale constraints. However, it is well known that primal-dual pairs of martingale OT problems may suffer from a non-vanishing duality gap even if the underlying distributions are univariate~\citep[Examples~8.1~\&~8.2]{beiglbock2017complete}. These observations provide little reason to expect that the assumptions of \Cref{thm:strong-duality} can be significantly relaxed. However, the compactness assumptions on~$\mc V$ and~$\mc W$ can be relaxed if $\hat\nu$ is discrete.

\begin{corollary}
    \label{cor:strong-duality}
    Suppose that~$c$ and~$f$ are real-valued functions satisfying Assumptions~\ref{ass:strong_duality}\,(a) and~\ref{ass:strong_duality}\,(b), $\mc G=\{\emptyset,(\mc V\times\mc W)^2\}$ is the trivial $\sigma$-algebra, $r>0$ and~$\hat \nu$ is a discrete distribution supported on the interior of~$\mc V\times\mc W$. Then, the supremum of the primal problem~\eqref{eq:mot-dro-extended} equals the infimum of a restricted variant of the dual problem~\eqref{eq:dual-mot}, where $\mc C_b(\mc V)$ is replaced with the family of all constant functions.
\end{corollary}

\begin{proof}
    The claim follows directly from standard results in semi-infinite duality theory~\citep{shapiro2001duality}; see, for example, \citep[Theorem~2.3]{li2022tikhonov} for details.
\end{proof}

\section{{Unifying \texorpdfstring{$\phi$}~-Divergence- and OT Discrepancy-Based DRO}}
\label{sec:cost_function}

In Section~\ref{sec:mot-express} we have seen that OT-DRO with conditional moment constraints is highly expressive and encapsulates popular DRO models as special cases. We now show that it also gives rise to new DRO models that simultaneously hedge against misspecification of the likelihood ratios and the actual outcomes of the unknown true probability distribution. To this end, we introduce a joint $\phi$-divergence and OT discrepancy.

\begin{definition}[Joint $\phi$-divergence and OT discrepancy]
If $\mc Z \subseteq \R^{d_z}$ is a convex closed set, $d:\mathcal{ Z \times Z}\rightarrow\R_+$ is a lower semicontinuous cost function satisfying the identity of indiscernibles, $\phi$ is an entropy function and $\theta\in \R_{++}^2$ a positive weight vector with $\theta_1^{-1} + \theta_2^{-1} = 1$, then the joint $\phi$-divergence and OT discrepancy induced by~$d$, $\phi$ and $\theta$ is the function $\mathds W^\theta_{d,\phi}: \mc P(\mc Z) \times \mc P(\mc Z)\rightarrow [0,+\infty]$ defined through
\begin{equation}
    \label{eq:phi-OT-divergence}
    \mathds W^\theta_{d,\phi}(\mu, \hat \mu) = \left\{ \begin{array}{cl}
    \inf & \theta_1\cdot \EE_{\gamma'}[ d(Z, \hat Z)] +\theta_2\cdot \mathds D_\phi (\gamma',\gamma)\\
    \rm{s.t.} & \gamma,\gamma' \in \mc P(\mc Z \times {\mc Z}), ~\gamma'_Z = \mu,~\gamma_{\hat Z} = \hat \mu.
    \end{array} \right.
\end{equation}
\label{def:joint-phi-OT-discrepancy}
\end{definition}

\begin{rmk}
\label{rem:joint-div-ot}
Note that $\mathds W^\theta_{d,\phi}(\mu, \mu)=0$. Indeed, if $\mu=\hat\mu$, then it is optimal to set $\pi=\pi'$ to the canonical self-coupling of~$\mu$ in~\eqref{eq:phi-OT-divergence}. In addition, if~$\mc Z$ is compact, then the infimum of problem~\eqref{eq:phi-OT-divergence} is attained for any $\mu,\hat\mu\in\mc P(\mc Z)$. Indeed, $\EE_{\pi'}[ d(Z, \hat Z)]$ and $\mathds D_\phi (\pi',\pi)$ are weakly lower semicontinuous in~$(\pi,\pi')$ thanks to \cite[Propositions~3.3 and~3.12]{Kuhn_Shafiee_Wiesemann_2025}. The compactness of~$\mc Z$ and the continuity of the coordinate projections further imply via \cite[Propositions~3.3 and~3.6]{Kuhn_Shafiee_Wiesemann_2025} that the feasible set of problem~\eqref{eq:phi-OT-divergence} is weakly compact. Hence, the infimum of problem~\eqref{eq:phi-OT-divergence} is attained.
\end{rmk}

Note that classical OT seeks one single coupling~$\gamma\in\Pi(\mu,\hat\mu)$ of~$\mu$ and~$\hat \mu$ that minimizes the transportation cost $\EE_\gamma[d(Z, \hat Z)]$. In contrast, the optimization problem in~\eqref{eq:phi-OT-divergence} seeks a {\em pair} of joint distributions $\gamma,\gamma'\in\mc P(\mc Z\times\mc Z)$, which form a `split' coupling in the sense that the first marginal of~$\gamma'$ equals~$\mu$, and the second marginal of~$\gamma$ equals~$\hat\mu$. The second term $\theta_2\cdot\mathds D_\phi (\gamma',\gamma)$ in the objective function penalizes disagreement between~$\gamma$ and~$\gamma'$, whereas the first term $\theta_1\cdot \EE_{\gamma'}[ d(Z, \hat Z)]$ is naturally interpreted as a standard transportation cost. The pair of weights $\theta=(\theta_1,\theta_2)$ regulates the relative cost of transporting mass versus the cost of allowing disagreement between~$\gamma'$ and~$\gamma$. As we will show formally in \Cref{lem:joint-ot-discperancy-to-classical}, when $\theta$ approaches~$(1,\infty)$ the mismatch penalty $\theta_2\cdot\mathds D_{\phi}(\gamma',\gamma)$ enforces the constraint~$\gamma'=\gamma$ such that~$\mathds W^\theta_{d,\phi}(\mu, \hat \mu)$ reduces to~$\mathds W_d(\mu,\hat\mu)$. In the opposite limit where $\theta$ approaches $(\infty,1)$, the transportation cost $\theta_1 \cdot \EE_{\gamma'}[d(Z,\hat Z)]$ forces $\gamma'$ to concentrate on the diagonal of the Cartesian product $\mc Z\times\mc Z$, where $z=\hat z$, such that~$\mathds W^\theta_{d,\phi}(\mu, \hat \mu)$ reduces to~$\mathds D_{\phi}(\mu,\hat\mu)$. Therefore, the joint $\phi$-divergence and OT discrepancy interpolates seamlessly between a pure OT discrepancy and a pure $\phi$-divergence. Increasing~$\theta_2$ encourages fidelity to the geometry of the outcome space encoded by the transportation cost function, while increasing~$\theta_1$ encourages fidelity to the likelihood structure encoded by the entropy~function.

\begin{lemma}
    The following identities hold for all $\mu, \hat \mu \in \mc P(\mc Z)$.
    \begin{enumerate}[label=\normalfont{(\roman*)}]
        \item $\lim_{\theta\to (1,\infty)}\mathds W_{d, \phi}^\theta (\mu, \hat \mu) = \mathds W_d(\mu, \hat \mu)$
        \item $\lim_{\theta\to (\infty,1)}\mathds W_{d, \phi}^\theta (\mu, \hat \mu) = \mathds D_\phi(\mu, \hat \mu)$
    \end{enumerate}
    \label{lem:joint-ot-discperancy-to-classical}
\end{lemma}
\begin{proof}
As~$\theta_2$ grows, the second term in the objective function of problem~\eqref{eq:phi-OT-divergence} forces~$\gamma'$ to approximate~$\gamma$. In the limit where~$\theta_1$ tends to~$1$ and~$\theta_2=\theta_1/(\theta_1-1)$ tends to~$\infty$, the objective function of~\eqref{eq:phi-OT-divergence} simplifies to~$\EE_\gamma[d(Z,\hat  Z)]$ if~$\gamma=\gamma'$; $=\infty$ otherwise. Thus, $\mathds W^\theta_{d,\phi}(\mu, \hat \mu)$ converges to~$\mathds W_d(\mu, \hat \mu)$. This proves assertion~(i).

As the cost function~$d$ is nonnegative and satisfies the identity of indiscernibles, the term $\theta_1 \mathbb E_{\gamma'}[d(Z,\hat Z)]$ in the objective function of~\eqref{eq:phi-OT-divergence} penalizes probability mass away from the diagonal $\{(z,\hat z): z=\hat z\}$. Thus, in the limit where~$\theta_1$ tends to~$\infty$ and~$\theta_2=\theta_1/(\theta_1-1)$ tends to~$1$, the only feasible limiting choice for~$\gamma'$ is the diagonal embedding of~$\mu$, which is defined as the pushforward distribution of~$\mu$ under the mapping $z\mapsto(z,z)$. Consequently, the first term in the objective function of~\eqref{eq:phi-OT-divergence} vanishes, and the overall objective function simplifies to $\mathds D_\phi(\gamma',\gamma)$ if~$\gamma'$ is the diagonal embedding of~$\mu$ and to~$\infty$ otherwise. Problem~\eqref{eq:phi-OT-divergence} thus reduces to
\begin{equation*}
    \inf_{\gamma\in\mathcal P(\mathcal Z\times \mc Z)} \left\{\mathds D_\phi(\gamma',\gamma): \gamma_{\hat Z}=\hat\mu\right\},
\end{equation*}
where $\gamma'$ denotes the diagonal embedding of $\mu$. As $\mu$ and $\hat \mu$ are obtained by pushing forward $\gamma'$ and $\gamma$ under the mapping $(z,\hat z)\mapsto \hat z$, respectively, the data processing inequality implies that the objective function~$\mathds D_\phi(\gamma',\gamma)$ of the resulting problem is bounded below by~$\mathds D_\phi(\mu, \hat \mu)$. This lower bound is attained if~$\gamma$ is set to the pushforward of~$\hat\mu$ under the mapping $z\mapsto(z,z)$. As this particular choice of~$\gamma$ is feasible in the above problem, its infimum must equal~$\mathds D_\phi(\mu, \hat \mu)$. This proves assertion~(ii). 
\end{proof}

If $\mathds D=\mathds W_{d,\phi}$ is a joint $\phi$-divergence and OT discrepancy, then the worst-case expectation problem in~\eqref{eq:dro} can be reformulated as an instance of problem~\eqref{eq:mot-dro}.

\begin{theorem}[Joint $\phi$-divergence and OT discrepancy-based DRO]
Consider an instance $(\ell, \mc Z, \hat \mu, \mathds D, r)$ of the worst-case expectation problem in~\eqref{eq:dro}, where $\mathds D=\mathds W^\theta_{d,\phi}$ is the joint $\phi$-divergence and OT discrepancy induced by~$d$, $\phi$ and~$\theta$. Set $\mathcal{V} = \mathcal{Z}$, $\mathcal{W}=\mathbb{R}_+$, $\mc G=\{\emptyset, (\mc V\times\mc W)^2\}$, $\hat \nu = \hat \mu \otimes \delta_1$, $f(v, w) = w \cdot \ell(v)$,  and 
\begin{equation}
    c((v,w), (\hat v, \hat w))= \theta_1 \cdot w \cdot  d(v, \hat v) +\theta_2 \cdot  \phi(w).
    \label{eq:interpolating-cost}
\end{equation} 
If $r>0$ and $\phi'_\infty=\infty$, then the inner worst-case expectation problem in~\eqref{eq:dro} has the same optimal value as the instance $(f, \mc V, \mc W, \mc G, \hat \nu, c, r)$ of problem~\eqref{eq:mot-dro}. 
\label{theorem:joint-phi-OT-discrepancy-DRO}
\end{theorem}

\begin{proof} 
As~$\phi'_\infty=\infty$, the discussion after \Cref{lemma:decomposition} implies that the $\phi$-divergence simplifies to $\mathds D_\phi(\gamma',\gamma) =\EE_\gamma[\phi(\frac{\diff\gamma'}{\diff\gamma}(Z, \hat Z))]$ for $\gamma'\ll\gamma$; $=\infty$ otherwise. Thus, we have
\begin{equation}
    \mathds W_{d, \phi}^\theta(\mu,\hat\mu)=\left\{\begin{array}{cl}
    \inf & \EE_{\gamma}\left[\theta_1\cdot \frac{\diff\gamma'}{\diff\gamma}(Z,\hat Z) \cdot d(Z, \hat Z) +\theta_2\cdot \phi\left( \frac{\diff\gamma'}{\diff\gamma}(Z, \hat Z) \right)\right] \\
    \rm{s.t.} & \gamma,\gamma' \in \mc P(\mc Z \times {\mc Z}), ~ \gamma'\ll\gamma,~\gamma'_Z = \mu,~\gamma_{\hat Z} = \hat \mu.
    \end{array}\right.
    \label{eq:joint-phi-ot-refor}
\end{equation}
For ease of notation, we henceforth use 
\begin{equation}
    \mathpzc W = \sup\limits_{\mu \in \mathcal P(\mc Z)} \left\{ \EE_\mu[\ell(Z)] : \mathds W^\theta_{d, \phi}(\mu, \hat \mu) \leq r\right\} 
    \label{eq:joint-phi-ot-worst-case}
\end{equation}
as shorthand for the worst-case expected loss of a fixed $\ell\in\mc L$ in~\eqref{eq:dro} and
\begin{equation}
    \mathpzc M = \sup\limits_{\nu \in \mathcal P(\mc V\times \mc W)} \left\{ \EE_{\nu}[f(V, W)] : \mathds M (\nu, \hat \nu) \leq r\right \}
    \label{eq:mot-worst-case}
\end{equation}
as shorthand for the worst-case expected loss in~\eqref{eq:mot-dro}. Note also that the cost function~$c$ defined in~\eqref{eq:interpolating-cost} satisfies \Cref{ass:growth} because~$\phi'_\infty=\infty$ and $\text{dom}(\phi)\subseteq \R_+$ imply that~$\phi$ grows superlinearly and because~$\hat W=1$ $\hat\nu$-almost surely. In the remainder we first show that $\mathpzc W \geq \mathpzc M$ (Step~1), and then we show that $\mathpzc M \geq \mathpzc W$ (Step~2).

\textit{Step 1 ($\mathpzc W \geq \mathpzc M$).} Choose any $\nu \in \mc P(\mc V \times \mc W)$ with $\mathds M(\nu, \hat \nu )\leq  r$. By \Cref{lemma:mot-attainment} in the appendix, the infimum of problem~\eqref{eq:mot} is thus attained by some $\pi \in \mc P((\mc V\times\mc W)^2)$ with $\pi_{(V, W)} = \nu$, $\pi_{(\hat V, \hat W)} = \hat \nu$, $\EE_{\pi}[W] = 1$ and $\EE_\pi[c((V, W), (\hat V, \hat W))] \leq r$. Next, we construct two probability distributions $\gamma, \gamma' \in \mc P(\mc Z \times \mc Z)$ with~$\gamma'\ll \gamma$ by setting $\gamma  = \pi_{(V, \hat V)}$ and specifying the Radon-Nikodym derivative of~$\gamma'$ with respect to~$\gamma$ as
\[
    \textstyle \frac{\diff \gamma'}{\diff \gamma}(V,\hat V) = \EE_{\pi}[W | V, \hat V]\quad \pi\text{-a.s.}
\]
Note that $\gamma'$ is indeed a probability distribution because 
\[
    \textstyle \EE_\gamma\left[ \frac{\diff \gamma'}{\diff \gamma}(V,\hat V) \right] = \EE_\pi\left[ \frac{\diff \gamma'}{\diff \gamma}(V,\hat V) \right] = \EE_\pi\left[\EE_{\pi}[W | V, \hat V] \right] = \EE_\pi\left[W\right]=1.
\]
In the following, we identify~$V$ with~$Z$ and~$\hat V$ with~$\hat Z$, and we interpret $\gamma$ and $\gamma'$ as distributions of~$(Z,\hat Z)$. We thus have $\gamma_{\hat Z} = \pi_{\hat V} = \hat \nu_{\hat V} = \hat \mu$, where the three equalities follow from the definitions of~$\gamma$, $\pi$ and~$\hat \nu$. Setting $\mu =\gamma'_Z\in \mc P(\mc Z)$, we also find
\[
    \textstyle \EE_\mu[\ell(Z)]  = \EE_\gamma \left[ \frac{\diff\gamma'}{\diff\gamma}(Z,\hat Z) \cdot \ell(Z) \right] = \EE_\pi \left[ W \cdot \ell(V) \right] = \EE_{\nu}[f(V, W)],
\]
where the three equalities follow from the construction of~$\mu$, $\gamma'$ and~$f$. Using a similar reasoning, one can further demonstrate that
\begin{align*}
    &\textstyle \EE_{\gamma}\left[\theta_1\cdot \frac{\diff\gamma'}{\diff\gamma}(Z,\hat Z) \cdot d(Z, \hat Z) +\theta_2\cdot \phi\left( \frac{\diff\gamma'}{\diff\gamma}(Z, \hat Z) \right)\right]\\
    =\,& \EE_{\gamma}\left[\theta_1\cdot \EE_{\pi}[W | V, \hat V] \cdot d(V, \hat V) +\theta_2\cdot \phi\left( \EE_{\pi}[W | V, \hat V] \right)\right] \leq  \EE_\pi \left[c((V, W), (\hat V, \hat W)) \right] \leq r,
\end{align*}
where the two inequalities follow from Jensen's inequality and from the feasibility of~$\pi$ in~\eqref{eq:mot-dro-extended}, respectively. These insights imply that $(\gamma, \gamma')$ is feasible in~\eqref{eq:joint-phi-ot-refor} with objective value at most~$r$, which implies that~$\mathds W_{d, \phi}^\theta(\mu,\hat\mu)\leq r$. This in turn ensures that~$\mu$ is feasible in~\eqref{eq:joint-phi-ot-worst-case} with objective value $\EE_{\mu}[\ell(Z)] = \EE_{\nu}[f(V, W)]$. As~$\nu$ is an arbitrary feasible solution of~\eqref{eq:mot-worst-case}, we may thus conclude that $\mathpzc W \geq \mathpzc M$.

\textit{Step 2 ($\mathpzc M \geq \mathpzc W$).} Choose any $\mu \in \mc P(\mc Z)$ with $\mathds W^\theta_{d, \phi}(\mu, \hat \mu) < r$. 
Thus, there exist distributions $\gamma, \gamma' \in \mc P(\mc Z\times\mc Z)$ with $\gamma' \ll \gamma$, $\gamma'_Z = \mu $, $\gamma_{\hat Z}=\hat\mu$ and
\[
    \textstyle \EE_{\gamma}\left[\theta_1\cdot \frac{\diff\gamma'}{\diff\gamma}(Z,\hat Z) \cdot d(Z, \hat Z) +\theta_2\cdot \phi\left( \frac{\diff\gamma'}{\diff\gamma}(Z, \hat Z) \right)\right] \leq r.
\]
In the following, we identify~$Z$ with~$V$ and~$\hat Z$ with~$\hat V$, and we interpret~$\gamma$ and~$\gamma'$ as distributions of~$(V,\hat V)$. We can now define $\pi \in \mc P((\mc V\times\mc W)^2)$ as the pushforward distribution of~$\gamma$ under the transformation $(V,\hat V)\mapsto (V, \frac{\diff\gamma'}{\diff\gamma}(Z,\hat Z), \hat V, 1)$. By construction, we thus have $\pi_{(\hat V, \hat W)} = \gamma_{\hat Z}\otimes \delta_1 = \hat \mu\otimes \delta_1$. Setting $\nu = \pi_{(V, W)}$, we also find
\begin{align*}
    \textstyle \EE_{\nu}[f(V,W)] = \EE_{\pi}[W\cdot \ell(V)] = \EE_{\gamma} \left[ \frac{\diff\gamma'}{\diff\gamma}(Z,\hat Z) \cdot \ell(Z) \right] = \EE_\mu[\ell(Z)],
\end{align*}
where the first equality exploits the definitions of~$\nu$ and~$f$, the second equality follows from the definition of~$\pi$ and the change of variable formula, and the last equality holds because~$\gamma_Z=\mu$. Using similar arguments, one can show that $\EE_\pi[W]=1$ and that
\begin{align*}
    \mathds M(\nu, \hat \nu) 
    &\leq \EE_\pi \left[c((V, W), (\hat V, \hat W)) \right] \\
    &= \textstyle \EE_{\gamma}\left[\theta_1\cdot \frac{\diff\gamma'}{\diff\gamma}(Z,\hat Z) \cdot d(Z, \hat Z) +\theta_2\cdot \phi\left( \frac{\diff\gamma'}{\diff\gamma}(Z, \hat Z) \right)\right] \leq r,
\end{align*}
where the first inequality holds because~$\pi$ is feasible in~\eqref{eq:mot}, the equality follows from the definition of~$c$ and the change of variable formula, and the second inequality holds because $(\gamma, \gamma')$ is feasible in~\eqref{eq:joint-phi-ot-refor}. Hence, $\nu$ is feasible in \eqref{eq:mot-worst-case} with objective value $\EE_\nu[f(V, W)] = \EE_\mu[\ell(Z)]$. As~$\mu$ is an arbitrary strictly feasible solution of~\eqref{eq:joint-phi-ot-worst-case} and as the supremum of~\eqref{eq:joint-phi-ot-worst-case} is attained by a sequence of strictly feasible solutions,\footnote{Note that as~$r>0$, any~$\mu$ feasible in~\eqref{eq:joint-phi-ot-worst-case} can be approximated by a sequence of distributions $\mu_n=(1-\frac{1}{n})\mu+\frac{1}{n}\hat \mu$, all of which satisfy $\mathds W^\theta_{d, \phi}(\mu, \hat \mu) < r$ because $\mathds W^\theta_{d, \phi}$ is convex and $\mathds W^\theta_{d, \phi}(\hat\mu,\hat\mu)=0$.} we may thus conclude that $\mathpzc M \geq \mathpzc W$. This observation completes the proof.
\end{proof}

\Cref{theorem:joint-phi-OT-discrepancy-DRO} shows that the joint $\phi$-divergence and OT discrepancy DRO model can be interpreted as a generalized OT-DRO problem by reducing the underlying worst-case expectation problem to an instance of~\eqref{eq:mot-dro}. 
We will see below that this reformulation often admits a finite conic programming reformulation, allowing for efficient computation using standard off-the-shelf optimization solvers.

Next, we dualize the joint $\phi$-divergence and OT discrepancy-based DRO problem.

\begin{theorem}
\label{thm:interpolating-phi-wass-refor}
If all assumptions of \Cref{theorem:joint-phi-OT-discrepancy-DRO} hold, $\hat\mu$ is a discrete distribution supported on the interior of~$\mc Z$, and the conjugate~$\phi^*$ of~$\phi$ is non-decreasing, then the inner maximization problem in~\eqref{eq:dro} has a strong dual equivalent~to
\begin{equation}
    \inf_{\lambda \in \R_+,\,\psi \in\mathbb{R}}  \lambda r - \psi+  \lambda \theta_2 \cdot \EE_{\hat \mu}\left[\phi^{*}\left(\frac{\ell_{\lambda \theta_1}(\hat Z) + \psi}{\lambda \theta_2 }  \right)\right],
    \label{eq:interpolating-kl-wass}
\end{equation}
where 
the $d$-transform of $\ell$ with step size $\alpha \geq 0$ is the function $\ell_\alpha:\mc Z\rightarrow \overline{\R}$ defined via  
\[
    \ell_{\alpha}(\hat z) = \sup_{z \in \mc Z} \big\{\ell(z)- \alpha \cdot d(z, \hat z)\big\}.
\]
\end{theorem}

\begin{proof}
By \Cref{theorem:joint-phi-OT-discrepancy-DRO}, the inner problem in~\eqref{eq:dro} is equivalent to~\eqref{eq:mot-dro}. Note that
\Cref{ass:strong_duality}\,(a) is satisfied thanks to the definition of~$c$ in~\eqref{eq:interpolating-cost}, the lower semicontinuity of~$d$ and the continuity of~$\phi$. Similarly, \Cref{ass:strong_duality}\,(b) holds because of the definitions of~$f$ and~$\hat \nu$. Finally, $\hat\nu=\hat\mu\otimes\delta_1$ is a discrete distribution supported on the interior of~$\mc V\times\mc W$. \Cref{cor:strong-duality} thus implies that problem~\eqref{eq:mot-dro-extended} has a strong dual akin to~\eqref{eq:dual-mot}, where~$\mc C_b(\mc G)$ is replaced with~$\mathbb R$. By the definitions of~$f$, $c$ and~$\hat \nu$, the nonnegativity of the entropy function~$\phi$ and the normalization condition~$\phi(1)=0$, this dual problem is given by
\begin{align*}
    & \inf_{\lambda \in \R_+,\, \psi \in\R} \lambda r - \psi + \EE_{\hat \nu }\left[  \sup_{v \in \mc V, w \in \mc W} \left( \ell(v) +\psi-\lambda \theta_1 \cdot  d(v,\hat V)\right)\cdot w   -\lambda\theta_2\cdot \phi(w) \right]\\
    = &  \inf_{\lambda \in \R_+,\,\psi \in\R} \lambda r - \psi  +  \EE_{\hat \mu} \left[\lambda\theta_2 \cdot \phi^{*}\left(\frac{1}{\lambda\theta_2}\sup_{z \in \mc Z} \ell(z )+ \psi-\lambda\theta_1\cdot d(z, \hat Z) \right) \right].
\end{align*}
Here, the equality holds because $\text{dom}(\phi)\subseteq\R_+=\mc W$, which implies that the maximum over~$w$ evaluates the conjugate of $\lambda\theta_2\cdot \phi(w)$, because the conjugate of a nonnegative multiple of a proper convex function coincides with the perspective of the conjugate of this function \citep[Theorem~16.1]{Rockafellar1970} and because~$\phi^*$ is non-decreasing by assumption. The claim finally follows from the definition of the $d$-transform of the loss function.
\end{proof} 

Problem~\eqref{eq:interpolating-kl-wass} simplifies significantly when $\phi$ is set to the entropy function of the Kullback-Leibler divergence (cf.~Table~\ref{tab:phi-divergence}), which has a non-decreasing conjugate. 

\begin{corollary}
\label{cor:interpolating-kl-wass}
If $\phi(t)=t\log t-t+1$, then problem~\eqref{eq:interpolating-kl-wass} is equivalent to
\begin{equation}
    \inf_{\lambda\in \R_+}\lambda r +\lambda \theta_2 \cdot \log \left(\EE_{\hat \mu}\left[ \exp\left(\frac{\ell_{\lambda\theta_1}(\hat Z)}{\lambda \theta_2}\right)\right]\right). 
    \label{eq:interpolating-kl-wass2}
\end{equation}
\end{corollary}
\begin{proof}
A direct calculation shows that $\phi^*(t)=\exp(t)-1$, which is convex and non-decreasing. For the special entropy function at hand, the unconstrained convex minimization over~$\psi$ in~\eqref{eq:interpolating-kl-wass} can thus be solved analytically for any fixed~$\lambda\in\mathbb R_+$. Indeed, the unique solution of the corresponding first-order optimality condition is
\[
    \psi^\star = - \lambda \theta_2 \cdot \log \left(\EE_{\hat \mu}\left[ \exp\left(\frac{\ell_{\lambda \theta_1}(\hat Z)}{\lambda \theta_2}\right)\right]\right) .
\]
Substituting~$\psi\opt$ into~\eqref{eq:interpolating-kl-wass} yields~\eqref{eq:interpolating-kl-wass2}.
\end{proof}
\begin{rmk}[Choice of~$\theta$]
By tuning~$\theta$ in problem~\eqref{eq:interpolating-kl-wass2}, one can interpolate between the dual forms of Kullback-Leibler divergence-based DRO (KL-DRO) and Wasserstein DRO. This observation is expected in view of \Cref{lem:joint-ot-discperancy-to-classical}. 
\begin{enumerate}[label=\normalfont(\alph*)]
    \item If $\theta_1 \nearrow \infty$ and $\theta_2\searrow 1 $, then the infimum of problem \eqref{eq:interpolating-kl-wass2} converges to
    \[
        \min\limits_{\lambda \in \R_+} \lambda r + \lambda \log\left(\EE_{\hat \mu} \left[\exp \left(\frac{\ell(\hat Z)}{\lambda }\right)\right]\right),
    \]
    which is precisely the dual KL-DRO problem \citep[Theorem 1]{hu2013kullback}.
    \item If $\theta_1 \searrow 1$ and $\theta_2 \nearrow \infty$, then the infimum of problem \eqref{eq:interpolating-kl-wass2} converges to
    \[
        \min\limits_{\lambda \in \R_+} \lambda r + \EE_{\hat \mu} \left[ \sup_{z \in \mc Z} \big\{\ell(z)- \lambda d(z, \hat Z)\big\} \right] ,
    \]
    which is precisely the dual OT-DRO problem \cite[Theorem~1]{ref:blanchet2019quantifying}. 
\end{enumerate}
\end{rmk}

We now show that the worst-case expectation problem \eqref{eq:interpolating-kl-wass2} can be transformed into a finite convex program for many practical loss functions. Specifically, we consider a worst-case expectation problem with $\ell(z) = \max_{k\in[K]}\ell_k(z)$, where $\ell_k:\R^{d_z} \rightarrow \overline \R$ are concave functions for all~$k \in [K]$. The representation of~$\ell$ as the pointwise maximum of concave functions does not sacrifice much modeling power; see, e.g.,~\citep{mohajerin2018data}.

\begin{theorem}[Convex reduction]
\label{thm:tractable}
If the support set $\mc Z \subseteq \R^{d_z}$ is convex and closed, the negative constituent functions $-\ell_k$ are proper, convex and closed for all~$k\in[K]$, the transportation cost is given by $d(z,\hat z) = \|z-\hat z\|_p$ for some $p\in[1,+\infty]$, and $\hat\mu=\frac{1}{n}\sum_{i=1}^n\delta_{\hat z_i}$, then problem \eqref{eq:interpolating-kl-wass2} is equivalent to the finite convex program
\begin{equation*}
 \begin{array}{cclll}
         &\min  &\lambda r  +t & \\
         &\suchthat  & \lambda \in \R_+, ~t \in \R, ~\eta \in \R_+^n, ~\zeta\in \R^n, ~\xi\in \R^{nK},~ \omega\in \R^{nK}\\
         && (\eta_i, \lambda \theta_2, \zeta_i - t) \in \mc K_{\exp} &\forall i \in [n] \\
         &&(-\ell_k)^*(\xi_{ik}-\omega_{ik}) +\sigma_{\mc Z}(\omega_{ik})-\xi_{ik}^T\hat{z}_i \leq \zeta_i  &\forall k \in [K], \,  \forall i \in [n]\\
           &&  \|\xi_{ik}\|_q \leq \lambda\theta_1 &\forall k \in [K], \, \forall i \in [n]\\
             &&\frac{1}{n}\sum_{i=1}^n \eta_i \leq \lambda \theta_2,
    \end{array}
    \label{eq:interpolating-mot-piecewise-concave}
\end{equation*}
where $\tfrac{1}{p}+\tfrac{1}{q}=1$, $\sigma_{\mc Z}$ is the support function of~$\mc Z$, and
the exponential cone is given~by 
\begin{small}
\begin{equation*}
    \mc K_{\exp} = {\rm{cl}}\left(\Big\{(x_1,x_2,x_3)\in \R^3: x_1\ge x_2 \cdot \exp\left(\tfrac{x_3}{x_2}\right),x_2>0\Big\} \right). 
\end{equation*}
\end{small}
\end{theorem}
\begin{proof}
By introducing an epigraphical variable~$t\in \R$, we can reformulate~\eqref{eq:interpolating-kl-wass2} as
\begin{align}
    & \left\{ \begin{array}{clll}
    \min  &\lambda r  +t & \\
    \suchthat  & \lambda \in \R_+, ~t \in \R\\
    & \lambda \theta_2 \log \left(\EE_{\hat \mu}\left[ \exp\left(\frac{\ell_{\lambda\theta_1}(\hat Z)}{\lambda \theta_2}\right)\right]\right) \leq t
    \end{array}\right. \nonumber 
    \\ = \, &\left\{ \begin{array}{clll}
    \min  &\lambda r  +t & \\
    \suchthat & \lambda \in \R_+, ~t \in \R,~ \eta \in \R_+^n,~ \zeta\in \R^n\\
    & (\eta_i, \lambda \theta_2, \zeta_i - t) \in \mc K_{\exp} &\forall i \in [n] \\
    & \sup\limits_{z\in \mc Z} \Big\{  \max\limits_{k\in[K]} \ell_k(z) -\lambda \theta_1 \cdot d(z,\hat z_i)\Big\}\leq \zeta_i  &\forall i \in [n]\\[2ex]
    &\frac{1}{n}\sum_{i=1}^n \eta_i \leq \lambda \theta_2.
    \end{array}\right.
    \label{eq:reformu2}
\end{align}
Note that the inequality constraint in the first problem shown above is equivalent to $\EE_{\hat\mu }[ \exp((\ell_{\lambda\theta_1}(\hat Z)-t) / (\lambda \theta_2))]\leq 1$. This constraint can be decomposed into~$n$ exponential cone constraints and one linear constraint by introducing an auxiliary decision variable~$\eta \in \R_+^n$. Problem~\eqref{eq:reformu2} is thus obtained by introducing an auxiliary variable $\zeta_i\geq \ell_{\lambda \theta_1}(\hat z_i)$ for every $i\in[n]$ and by replacing the $d$-transform~$\ell_{\lambda \theta_1}(\hat z_i)$ by its definition. The claim then follows by applying standard reformulations of the resulting robust constraints as in~\cite[Theorem~4.2]{mohajerin2018data}.
\end{proof}

If the loss function~$\ell$ is piecewise linear and the transportation cost function~$d$ is the squared Euclidean norm, then problem~\eqref{eq:interpolating-kl-wass2} reduces to a tractable conic program.

\begin{corollary}[Piecewise linear loss and quadratic cost]
\label{coro:piece-lir}
$\!\!\!$If $\mc Z =\R^{d_z}$, $d(z,\hat z) = \|z-\hat z\|^2$ is quadratic and $\ell(z) = \max_{k \in [K]} a_k^\top z+b_k$ is piecewise linear, then problem~\eqref{eq:interpolating-kl-wass2} is equivalent to the tractable conic program
\begin{equation*}
 \begin{array}{cclll}
         &\min  &\lambda r  +t & \\
         &{\rm{s.t.}}   & \lambda \in \R_+, t \in \R, ~\eta \in \R_+^n, ~\zeta\in \R^n\\
         && (\eta_i, \lambda \theta_2, \zeta_i - t) \in \mc K_{\exp} &\forall i \in [n] \\
         &&a_k^T\hat z_i+b_k + \frac{\|a_k\|^2}{4\lambda \theta_1}\leq \zeta_i  &\forall k \in [K], \,  \forall i \in [n]\\
        &&\frac{1}{n}\sum_{i=1}^n \eta_i \leq \lambda \theta_2.
    \end{array}
\end{equation*} 
\end{corollary}
\begin{proof}
Following the same steps as in the proof of \Cref{thm:tractable}, we show that~\eqref{eq:interpolating-kl-wass2} is equivalent to~\eqref{eq:reformu2} of. Next, the $i$-th robust constraint in~\eqref{eq:reformu2} can be recast as
\begin{align*}
    & \sup\limits_{z\in \R^d} \left\{\max\limits_{k\in[K]} a_k^\top z+b_k -\lambda \theta_1 \|z -\hat{z}_i\|^2\right\} \leq \zeta_i \\
    \iff  ~& \sup\limits_{z \in  \R^d} \Big\{ a_k^\top z+b_k- \lambda \theta_1 \|z -\hat{z}_i\|^2\Big\}\leq \zeta_i\quad\forall k\in[K]\\
    \iff ~&  a_k^T\hat z_i+b_k + \frac{\|a_k\|^2}{4\lambda \theta_1}\leq \zeta_i\quad\forall k\in[K],
\end{align*}
where the first equivalence holds because the supremum over~$v$ and the maximum over~$k$ can be interchanged, and the second equivalence is obtained by solving the convex quadratic program over~$v$ in closed form. The claim then follows by substituting the resulting constraint back into~\eqref{eq:reformu2} for every~$i\in[n]$.
\end{proof}

\section{Numerical Results}
\label{sec:visu}
We now show numerically that the joint $\phi$-divergence and OT discrepancy DRO model of Section~\ref{sec:cost_function} can outperform KL-DRO and Wasserstein DRO, even in the presence of noise or adversarial perturbations. Throughout this section we investigate a synthetic support vector machine (SVM) problem with training dataset $\{\hat z_i\}_{i=1}^n$. Each training sample $\hat z_i = (\hat x_i, \hat y_i)$ consists of a feature vector $\hat x_i \in \mathbb R^{d_x}$ and a label $\hat y_i \in \{+1,-1\}$. For a linear classifier parametrized by $(\beta,b) \in \mathbb R^{d_x} \times \mathbb R$, the prediction loss of a random sample $Z=(X,Y)$ is measured by the hinge loss $\ell_{(\beta,b)}(Z)=\max\{1 - Y(\beta^\top X + b),0\}$. We construct a distributionally robust SVM that minimizes the worst case expected hinge loss over all distributions of~$Z$ in an $r$-neighborhood of the empirical distribution $\hat\mu = \frac{1}{n}\sum_{i=1}^n \delta_{\hat z_i}$ with respect to the joint $\phi$ divergence and OT discrepancy of \Cref{def:joint-phi-OT-discrepancy}. In all experiments we use the transportation cost $d(z,\hat z) = \|x-\hat x\|_\infty^2 + \infty \cdot |y-\hat y|$, which forbids label changes, and choose $\phi$ to be the entropy function of the Kullback Leibler divergence. As the hinge loss is piecewise linear and we use the $\infty$-norm to quantify transportation costs in the feature space, the DRO problem may admit multiple optimal solutions. To promote numerical stability and consistent selection of a minimizer in degenerate cases, we add a small regularization term $10^{-3}\times \|\beta\|_2$ to the hinge loss. The source code of our experiments is available from \url{https://github.com/BaharTaskesen/Unifying-DRO}.

We use the following procedure to generate synthetic datasets. First, we sample an $s^\star$-sparse ground-truth parameter vector~$\beta^\star \in \R^{d_x}$ for some $s^\star \in[d_x]$. Specifically, we construct~$\beta^\star$ by first drawing a random vector from~$\mathcal N(0,I_{d_x})$ and then setting all but~$s^\star$ randomly selected entries of this vector to~$0$. Next, we generate $10{,}000$ independent feature vectors $\hat x_i \sim \mathcal N(0,I_{d_x})$. Labels are generated according to a noisy linear rule, that is, we set $\hat y_i = \operatorname{sign}(\beta^{\star\top}\hat x_i + \xi_i)$, where the disturbances~$\xi_i \sim \mathcal N(0,0.1)$ are independent of each other as well as independent of the features. The first $n$ samples are used for training, and the rest for testing. All reported results are averaged over $10$ repetitions of the same experiment with independent datasets.

\textbf{Influence of the Interpolation Parameter.}
\Cref{fig:theta-hyperpar} reports the correct classification rate (CCR) of the joint $\phi$-divergence and OT DRO model with $d_x=100$, $s^\star=10$, $r=1$ and $n=200$ as a function of $\theta_1 \in [1,10^3]$. The parameter $\theta_2$ is chosen such that $\theta_1^{-1} + \theta_2^{-1} = 1$. Lines and shaded areas represent averages and ranges over the $10$ trials, respectively. We observe that intermediate values of $\theta_1$ yield the best performance, outperforming both the Kullback-Leibler divergence regime corresponding to~$\theta_1 \gg 1$, where robustness is primarily driven by likelihood reweighting, and the Wasserstein distance regime corresponding to $\theta_1 \gtrsim 1$, where robustness is driven by geometric perturbations. These results indicate that, in the presence of label noise and limited training data, interpolating between divergence-based and Wasserstein distance-based robustness can substantially improve generalization performance.


\textbf{Influence of the Radius.}
We now assess how the out-of-sample performance of the different distributionally robust classifiers depends on the radius~$r$ of the underlying ambiguity set. Here, we set $d_x=100$, $s^\star=10$ and $n=200$. The interpolation parameter~$\theta$ of the joint $\phi$-divergence and OT discrepancy-based SVM is is tuned by $5$-fold cross validation. In addition, we construct Wasserstein distance-based and Kullback Leibler divergence-based SVMs by setting $\theta=(1,\infty)$ and $\theta=(\infty,1)$, respectively. 
Each of the three DRO models is solved for~$10$ radii~$r$ on a logarithmically spaced grid covering the interval $[10^{-5}, 1]$. \Cref{fig:ccr-radius} reports the CCR of each model on the test set as a function of~$r$. It also shows the CCR of a non-robust empirical SVM. Lines and shaded areas represent averages and ranges over the $10$ trials. As~$r$ tends to~$0$, all DRO models collapse to the empirical SVM and thus display the same CCR. At larger values of~$r$, however, the geometry of the ambiguity set matters. The Wasserstein and joint $\phi$-divergence and OT DRO models exhibit a marked improvement in CCR, reflecting their ability to exploit structural knowledge about the sparsity of~$\beta^\star$, which is captured by the $\infty$-norm transportation cost. In contrast, the Kullback Leibler divergence-based model, which only re-weights samples without modifying their locations, shows limited sensitivity to the radius and remains close to the empirical baseline. For larger radii, the performance of the Wasserstein model deteriorates sharply, indicating excessive conservatism. By contrast, the joint $\phi$-divergence and OT model degrades more gradually and maintains the highest CCR across a wide range of radii. This behavior suggests that combining OT-based perturbations with likelihood re-weighting leads to a more balanced form of robustness.

\begin{figure}
\centering

\begin{subfigure}{0.31\textwidth}
  \centering
  \includegraphics[width=\linewidth]{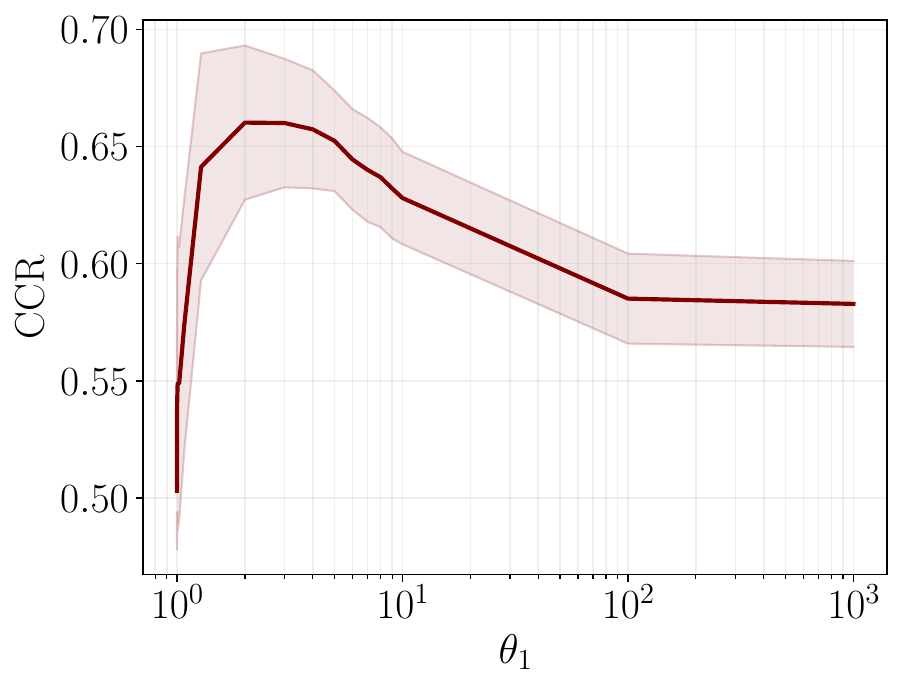}
  \caption{}
  \label{fig:theta-hyperpar}
\end{subfigure}\hfill
\begin{subfigure}{0.31\textwidth}
  \centering
  \includegraphics[width=\linewidth]{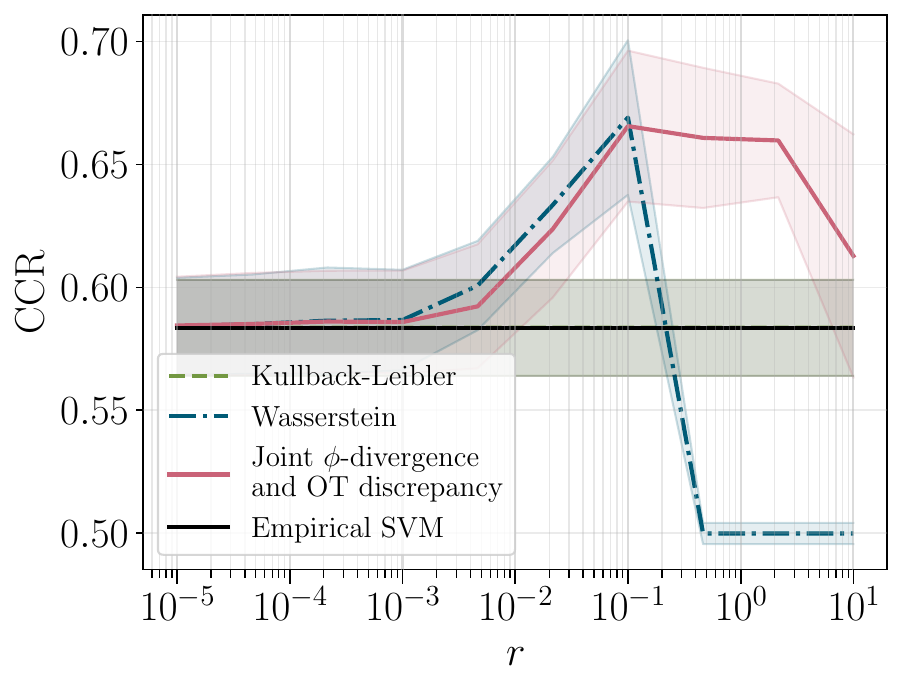}
  \caption{}
  \label{fig:ccr-radius}
\end{subfigure}\hfill
\begin{subfigure}{0.31\textwidth}
  \centering
  \includegraphics[width=\linewidth]{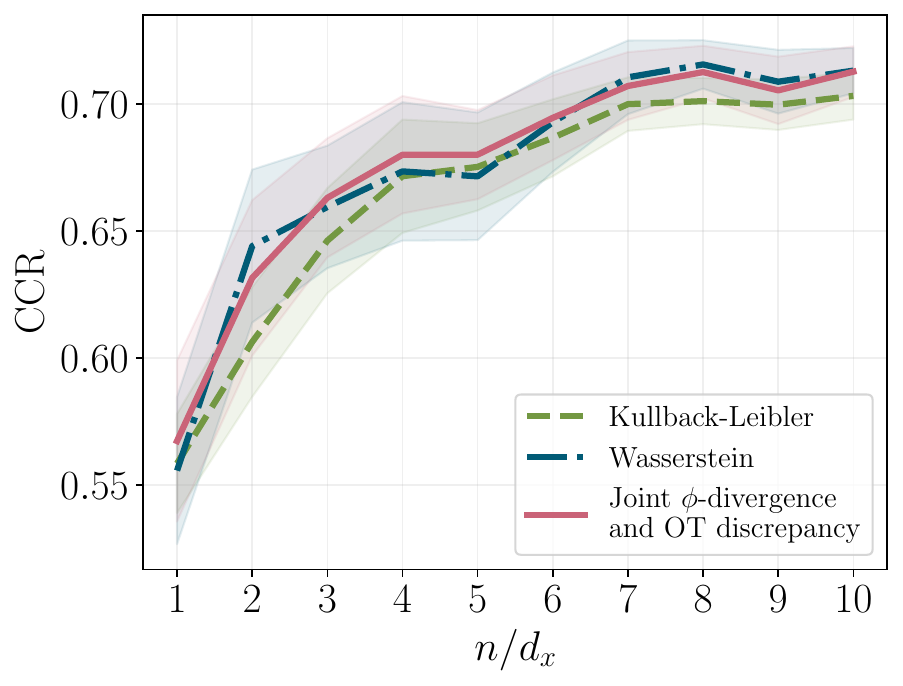}
  \caption{}
  \label{fig:n-d}
\end{subfigure}

\caption{Averages (solid lines) and ranges (shaded areas) of the CCR for the joint $\phi$-divergence and OT discrepancy-based SVM and several baselines, shown as functions of $\theta_1$, $r$ and $n$.}
\label{fig:ccr}
\end{figure}

\textbf{Influence of the Sample Size.}
Lastly, we investigate how the CCR of the different DRO models depends on~$n$. In this experiment we set~$d_x=10$ and $s^\star=5$. As before, the parameter $\theta$ of the joint $\phi$-divergence and OT DRO model is tuned via $5$-fold cross validation. For each value of~$n \in \{10,20,30,40,50, 60, 70, 80,90,100\}$ and for each DRO model, we select the best-performing radius~$r$ from a grid of~$10$ logarithmically spaced values in~$[10^{-5},1]$. The resulting averages and ranges of the test CCR over {$10$} independent trials are visualized in \Cref{fig:n-d} as a function of~$n/d_x$ (where~$d_x$ is fixed). We observe that the CCR of all models increases with~$n$. The joint $\phi$-divergence and OT discrepancy DRO model consistently achieves the highest or near-highest CCR, but the Wasserstein DRO model remains competitive across the entire low data regime that is shown ({$n\leq 100$}). The Kullback-Leibler divergence-based model, however, is strictly dominated even at larger sample sizes. These observations suggest that, when the feature dimension is small and there is exploitable geometric structure, OT-based robustness remains effective.

Overall, this experiment highlights that the relative advantage of different ambiguity sets depends on the interaction between sample size, feature dimension, and label noise, and that the proposed joint $\phi$-divergence and  formulation provides stable and robust performance across these regimes.

\appendix
\section{Technical Lemmas}
We first establish a basic result about conditional expectations, and we include a proof to keep this paper self-contained.
\begin{lemma}
\label{lemma:half-line}
    If $(X,Y)\sim\pi \in\mc P(\R^2)$ and $\EE_\pi[|Y|] < \infty$, then
    \begin{equation}
        \EE_\pi[Y | X] =0~\pi\text{-a.s.}\quad \iff \quad \EE_\pi[Y \boldsymbol 1_{\{X \leq  a\}}] = 0~\forall a\in \R.
    \label{eq:half-line-exp}
    \end{equation}
\end{lemma}
\begin{proof}
If $\EE_\pi[Y | X] =0$ $\pi$-almost surely, then the tower property of conditional expectations readily ensures that $\EE_\pi[Y \boldsymbol 1_{\{X \leq  a\}}] = \EE_\pi[ \EE_\pi[Y | X]\boldsymbol 1_{\{X \leq  a\}}] = 0$ for all fixed~$a\in \R$. This implies sufficiency. As for necessity, assume that $\EE_\pi[Y \boldsymbol 1_{\{X \leq  a\}}]=0$ for all $a\in \R$, and define a signed measure~$\mu$ on the Borel $\sigma$-algebra $\mathcal B(\R)$ through
\[
    \mu(B) = \EE_\pi\left[Y \boldsymbol 1_{\{X \in B\}}\right] \quad \forall B \in \mathcal B(\R).
\]
By assumption, we have $ \mu((-\infty, a]) =0$ for all $a\in \R$. Consequently, we also have
\[
    \mu(\R) = \EE_\pi[Y] = \lim\limits_{a\to \infty} \EE_\pi\left[ Y \boldsymbol 1_{\{X \leq a\}}\right] = 0
\]
by the dominated convergence theorem, which applies because $\EE_\pi[|Y|] < \infty$. Next, we define the family $\mathcal A = \{(-\infty, a]: a\in \R\}$ of half-lines, which is closed under finite intersections and thus constitutes a $\pi$-system. By \cite[Lemma~4.19]{aliprantis2006infinite}, $\mathcal A$ generates~$\mathcal B(\R)$. We also define the family $\mathcal L = \{B \in \mathcal B(\R) :  \mu(B) =0\}$ of $\mu$-null sets. As~$\mu(\R)=0$, it is clear that if~$B \in \mathcal L$, then $B^c\in\mc L$ because $ \mu(B^c) =  \mu(\R) - \mu(B) = 0$. Hence, $\mc L$ is closed under complements. Also, if $B_i\in \mathcal L$, $i \in \mathbb N$, are pairwise disjoint sets, then $\cup_{i\in \mathbb N} B_i\in\mc L$ because $ \mu(\cup_{i\in \mathbb N} B_i)  = \sum_{i \in \mathbb N} \mu(B_i) =0$. Hence, $\mc L$ is closed under countable unions of pairwise disjoint sets. In summary, $\mathcal L$ is a $\lambda$-system. As~$\mathcal A \subseteq \mathcal L$, the $\pi$-$\lambda$ theorem \cite[Lemma~4.11]{aliprantis2006infinite} implies that $\sigma(\mathcal A) = \mathcal B(\R) \subseteq \mathcal L$, that is, $\mu(B) =0 $ for every  $B \in \mathcal B(\R)$. For any bounded Borel function $\varrho: \R \to \R$ we thus have
\[
    0= \int_\R \varrho(x)\, \mu(\diff x) = \EE_\pi[ Y \varrho(X)] = \EE_\pi[\EE_\pi[ Y| X]\, \varrho(X)]],
\]
which implies that $\EE_\pi[Y | X] =0$ $\pi$-almost surely.
\end{proof}

We can now show that martingale properties are preserved under weak limits.

\begin{lemma}
\label{lm:martinagle_weak_converge} 
Let $\{\pi_n\}_{n\in\N}$ be a
sequence in $\mc P (\R^2)$ that converges weakly to $\pi$. If
\begin{enumerate}[label=\normalfont(\alph*)]
    \item\label{mart-n}   $\EE_{\pi_n}[Y| X]=X$ $\pi_n$-a.s.
          for every $n\in\N$ and
    \item\label{unif-mom} $\sup_{n\in\N}
           \EE_{\pi_n} [\| (X,Y)\|^{1+\delta}]< \infty$ for some $\delta >0$,
\end{enumerate}
then the weak limit retains the martingale property, that is, $\EE_{\pi}[Y| X]=X$ $\pi$-a.s.
\end{lemma}
\begin{proof}
    As $\pi_n$ converges weakly to $\pi$, Skorokhod’s representation theorem ensures that there exists a probability space on which random vectors $(X_n,Y_n)\sim\pi_n$ and $(X,Y)\sim\pi$ are defined, and where $(X_n,Y_n)$ converges to $(X,Y)$ almost surely. Let~$U$ be a standard Gaussian random variable independent of all other random variables on the same probability space. From now on we use $\EE[\cdot]$ to denote the expectation operator on this space. Assumption~\ref{mart-n} implies that $\EE[Y_n| X_n]=X_n$ almost surely, while Assumption~\ref{unif-mom} implies that the pair $(X_n,Y_n)$ has uniformly bounded moments of order $1+\delta$. A straightforward generalization of \cite[Proposition~1] {blanchet2024empirical} from $\delta=1$ to any $\delta>0$ implies that $(Y_n+U,X_n+U)$ also forms a martingale pair, that is, 
    \[
        \EE[Y_n+U| X_n+U]=X_n+U ~\text{a.s.}\quad \iff\quad \EE[(Y_n - X_n)\mathbf{1}_{\{X_n+U<a\}}]=0 ~ \forall a\in\R.
    \]
    Here, the equivalence follows from \Cref{lemma:half-line}, which applies due to assumption~\ref{unif-mom}. Next, note that the sequence $\{(Y_n-X_n) \mathbf{1}_{\{X_n+U<a\}}\}_{n\in\N}$ is uniformly integrable by~\ref{unif-mom}. Also, as $(X_n,Y_n,U)$ converges almost surely to $(X,Y,U)$ and as the probability of the event $\{X+U=a\}$ is zero, $(Y_n-X_n) \mathbf{1}_{\{X_n+U<a\}}$ converges to $(Y- X)\mathbf{1}_{\{X+U<a\}}$  almost surely. We may thus use Vitali's theorem to conclude that
    \[
        \EE[(Y - X)\mathbf{1}_{\{X+U<a\}} ]= \lim_{n\to\infty } \EE[(Y_n - X_n)\mathbf{1}_{\{X_n+U<a\}}]=0\quad \forall a\in\R.
    \]
    Hence, we have $\EE[Y+U| X+U]=X+U$ almost surely, and thus \cite[Proposition~1] {blanchet2024empirical} implies that $\EE[Y| X]=X$ almost surely. The claim now follows because $(X,Y)\sim\pi$.
\end{proof}

We now show when the generalized OT problem~\eqref{eq:mot} has a solution.

\begin{lemma}
\label{lemma:mot-attainment}
If \Cref{ass:growth} holds and $\mathds M(\nu, \hat \nu)< \infty$ for some $\nu,\hat\nu\in\mc P(\mc V,\mc W)$, then the infimum of the generalized OT problem~\eqref{eq:mot} is attained.
\end{lemma}

\begin{proof}
Select any $r>\mathds M(\nu, \hat \nu)$, and define the set of all couplings of $\nu$ and $\hat \nu$ as
\[
    \Pi(\nu, \hat \nu) = \left\{\pi\in \mc P((\mc V \times \mc W)^2): \pi_{(V, W)}  = \nu,\ \pi_{(\hat V, \hat W)}  = \hat \nu\right\}.
\]
It is well known that $\Pi(\nu, \hat \nu)$ is weakly compact; see, e.g., \cite[Corollary~3.16]{Kuhn_Shafiee_Wiesemann_2025}. Clearly, the infimum of problem~\eqref{eq:mot} does not change if we append the expected cost constraint $\EE_\pi[c((V, W), (\hat V, \hat W))] \leq r$. The set of all couplings $\pi\in \Pi(\nu, \hat \nu)$ that satisfy this constraint is also weakly compact because~$c$ is bounded below and lower semicontinuous, which implies via \cite[Proposition~3.3]{Kuhn_Shafiee_Wiesemann_2025} that $\EE_\pi[c((V, W), (\hat V, \hat W))]$ is weakly lower semicontinuous in~$\pi$. An elementary estimate then implies that any~$\pi\in \Pi(\nu, \hat \nu)$ with an expected cost of at most~$r$ satisfies the generalized moment bound
\begin{align*}
    \EE_\pi \bigl[\|(W,1)\|^{1+\delta}\bigr] & \leq  2^{\frac{1+\delta}{2}}\bigl(1+\EE_\pi \bigl[ |W|^{1+\delta}\bigr] \bigr) \\
    & \leq 2^{\frac{1+\delta}{2}}\left(1+ \frac{1}{k} \EE_\pi[c((V,W), (\hat V, \hat W))] + \frac{1}{k}\EE_{\hat\nu}\bigl[\varphi(\hat W)\bigr] \right) \\
    & \leq 2^{\frac{1+\delta}{2}}\left(1+ \frac{r}{k} + \frac{1}{k}\EE_{\hat\nu}\bigl[\varphi(\hat W)\bigr] \right)=M<\infty,
\end{align*}
where the second and fourth inequalities follow from Assumptions~\ref{ass:growth}\,(b) and~\ref{ass:growth}\,(a).

As the $\sigma$-algebra~$\mathcal G$ is countably generated (see Definition~\ref{def:mot}), one can use \cite[Theorem~2.1]{mackey1957borel} to prove that there exists a Borel function $\varrho:(\mathcal V\times\mathcal W)^2 \to [1,2]$ such that the random variable $R=\varrho(V,W,\hat V,\hat W)$ generates~$\mc G$. As~$R$ is strictly positive, the $\pi$-almost sure constraints $\EE_\pi[W | \mc G]=1$ and $\EE_\pi[W R| R]=R$ are equivalent. Definig the auxiliary random variables~$X=R$ and~$Y=WR$, the latter constraint simplifies to $\EE_\pi[Y|X]=X$. Our insights from the first part of the proof further imply that
\[
    \EE_{\pi} [\| (X,Y)\|^{1+\delta}] \leq 2^{1+\delta} \EE_\pi [\|(W,1)\|^{1+\delta}] \leq 2^{1+\delta} M
\]
on the family of all couplings~$\pi\in \Pi(\nu, \hat \nu)$ with an expected cost of at most~$r$. Thus, \Cref{lm:martinagle_weak_converge} ensures that the subset of all~$\pi$ in this family satisfying the $\pi$-almost sure constraint $\EE_\pi[Y|X]=X$ is weakly compact. In summary, we have shown that the restriction of the feasible set of problem~\eqref{eq:mot} to couplings with expected cost of at most~$r$ is weakly compact and that the objective function of~\eqref{eq:mot} is weakly lower semicontinuous. This implies that the infimum of problem~\eqref{eq:mot} is attained.
\end{proof}

\section*{Acknowledgment}
Jose Blanchet is supported by the Air Force Office of Scientific Research under award number FA9550-20-1-0397 and NSF 1915967, 2118199, 2229012, 2312204.
Daniel Kuhn is supported by the Swiss National Science Foundation under the NCCR Automation, grant agreement 51NF40\_180545. Jiajin Li is supported by a Natural Sciences and Engineering Research Council of Canada Discovery Grant RGPIN-2025-05817.

\bibliography{ref}

@article{Kuhn_Shafiee_Wiesemann_2025, 
title={Distributionally robust optimization}, 
volume={34}, 
journal={Acta Numerica}, 
author={Kuhn, Daniel and Shafiee, Soroosh and Wiesemann, Wolfram}, 
year={2025}, pages={579--804}}

@article{mackey1957borel,
  title={Borel structure in groups and their duals},
  author={Mackey, George W},
  journal={Transactions of the American Mathematical Society},
  volume={85},
  number={1},
  pages={134--165},
  year={1957}
}

@misc{blanchet2024empirical,
  title={Empirical martingale projections via the adapted {W}asserstein distance},
  author={Blanchet, Jose and Wiesel, Johannes and Zhang, Erica and Zhang, Zhenyuan},
  archivePrefix={arXiv},
  eprint={2401.12197},
  year={2024}
}

@article{beiglbock2017complete,
  title={Complete duality for martinagle optimal transport on the line},
  author={Beiglb{\"o}ck, Mathias and Nutz, Marcel and Touzi, Nizar},
  journal={The Annals of Probability},
  volume={45},
  number={5},
  pages={3038--3074},
  year={2017}
}

@book{aliprantis2006infinite,
  title={Infinite Dimensional Analysis},
  author={Aliprantis, Charalambos D and Border, Kim C},
  year={2006},
  publisher={Springer}
}

@phdthesis{jiajin2021efficient,
  title={Efficient and Provable Algorithms for {W}asserstein Distributionally Robust Optimization in Machine Learning},
  author={Li, Jiajin},
  year={2021},
  school={The Chinese University of Hong Kong}
}

@inproceedings{yu2022fast,
  title={Fast distributionally robust learning with variance-reduced min-max optimization},
  author={Yu, Yaodong and Lin, Tianyi and Mazumdar, Eric V and Jordan, Michael I.},
  booktitle={International Conference on Artificial Intelligence and Statistics},
  year={2022}
}

@inproceedings{li2020fast,
  title={Fast epigraphical projection-based incremental algorithms for  {W}asserstein distributionally robust support vector machine},
  author={Li, Jiajin and Chen, Caihua and So, Anthony Man-Cho},
  booktitle={Neural Information Processing Systems},
  year={2020}
}

@article{zhang2022simple,
  title={A short and general duality proof for {W}asserstein distributionally robust optimization},
  author={Zhang, Luhao and Yang, Jincheng and Gao, Rui},
  journal={Operations Research},
  volume={73},
  number={4},
  pages={2146--2155},
  year={2025}
}

@book{van2000asymptotic,
  title={Asymptotic Statistics},
  author={Van der Vaart, Aad W},
  year={1998},
  publisher={Cambridge University Press}
}

@article{sason2018f,
  title={On f-divergences: Integral representations, local behavior, and inequalities},
  author={Sason, Igal},
  journal={Entropy},
  volume={20},
  number={5},
  pages={383},
  year={2018},
  publisher={MDPI}
}

@article{csiszar1964informationstheoretische,
  title={{Eine informationstheoretische Ungleichung und ihre Anwendung auf den Beweis der Ergodizit\"at von Markoffschen Ketten}},
  author={Csisz{\'a}r, Imre},
  journal={Publications of the Mathematical Institute of the Hungarian Academy of Sciences},
  volume={8},
  pages={85--108},
  year={1963}
}

@article{ali1966general,
  title={A general class of coefficients of divergence of one distribution from another},
  author={Ali, Syed Mumtaz and Silvey, Samuel D},
  journal={Journal of the Royal Statistical Society: Series B (Methodological)},
  volume={28},
  number={1},
  pages={131--142},
  year={1966},
  publisher={Wiley Online Library}
}

@inproceedings{seguy2018large,
  title={Large Scale Optimal Transport and Mapping Estimation},
  author={Seguy, Vivien and Damodaran, Bharath Bhushan and Flamary, Remi and Courty, Nicolas and Rolet, Antoine and Blondel, Mathieu},
  booktitle={International Conference on Learning Representations},
  year={2018}
}

@inproceedings{li2019first,
  title={A First-Order Algorithmic Framework for Distributionally Robust Logistic Regression},
  author={Li, Jiajin and Huang, Sen and So, Anthony Man-Cho},
  booktitle={Advances in Neural Information Processing Systems},
  year={2019}
}

@article{ref:gao2022distributionally,
  title={Distributionally robust stochastic optimization with {W}asserstein distance},
  author={Gao, Rui and Kleywegt, Anton},
  journal={Mathematics of Operations Research},
  year="2023",
  volume="48",
  pages="603-655"
}

@article{ref:csiszar1967information,
author="Csisz{\'a}r, I.",
title="Information-type measures of difference of probability distributions and indirect observation",
journal="Studia Scientiarum Mathematicarum Hungarica",
year="1967",
volume="2",
pages="229-318"
}

@article{pflug2007ambiguity,
  title={Ambiguity in portfolio selection},
  author={Pflug, Georg and Wozabal, David},
  journal={Quantitative Finance},
  volume={7},
  number={4},
  pages={435--442},
  year={2007},
  publisher={Taylor \& Francis}
}

@article{el2005robust,
  title={Robust solutions to {M}arkov decision problems with uncertain transition matrices},
  author={El Ghaoui, Laurent and Nilim, Arnab},
  journal={Operations Research},
  volume={53},
  number={5},
  pages={780--798},
  year={2005}
}

@article{iyengar2005robust,
  title={Robust dynamic programming},
  author={Iyengar, Garud N},
  journal={Mathematics of Operations Research},
  volume={30},
  number={2},
  pages={257--280},
  year={2005},
  publisher={INFORMS}
}

@book{ben2009robust,
  title={Robust Optimization},
  author={Ben-Tal, Aharon and El Ghaoui, Laurent and Nemirovski, Arkadi},
  year={2009},
  publisher={Princeton University Press}
}

@article{bertsimas2004price,
  title={The price of robustness},
  author={Bertsimas, Dimitris and Sim, Melvyn},
  journal={Operations Research},
  volume={52},
  number={1},
  pages={35--53},
  year={2004},
  publisher={Informs}
}

@incollection{lim2006model,
  title={Model uncertainty, robust optimization, and learning},
  author={Lim, Andrew EB and Shanthikumar, J George and Shen, ZJ Max},
  booktitle={Models, Methods, and Applications for Innovative Decision Making},
  pages={66--94},
  year={2006},
  publisher={INFORMS}
}

@incollection{blanchet2021statistical,
  title={Statistical analysis of {W}asserstein distributionally robust estimators},
  author={Blanchet, Jose and Murthy, Karthyek and Nguyen, Viet Anh},
  booktitle={Emerging Optimization Methods and Modeling Techniques with Applications},
  pages={227--254},
  year={2021},
  publisher={INFORMS}
}

@article{dapogny2022entropy,
  title={Entropy-regularized {W}asserstein distributionally robust shape and topology optimization},
  author={Dapogny, Charles and Iutzeler, Franck and Meda, Andrea and Thibert, Boris},
  journal={Structural and Multidisciplinary Optimization},
  volume={66},
  number={3},
  pages={42},
  year={2023}
}

@article{azizian2022regularization,
  title={Regularization for {W}asserstein distributionally robust optimization},
  author={Azizian, Wa{\"\i}ss and Iutzeler, Franck and Malick, J{\'e}r{\^o}me},
  journal={ESAIM: Control, Optimisation and Calculus of Variations},
  volume={29},
  number={33},
  pages={1--31},
  year={2023}
}

@article{lam2019recovering,
  title={Recovering best statistical guarantees via the empirical divergence-based distributionally robust optimization},
  author={Lam, Henry},
  journal={Operations Research},
  volume={67},
  number={4},
  pages={1090--1105},
  year={2019},
  publisher={INFORMS}
}

@article{lam2016robust,
  title={Robust sensitivity analysis for stochastic systems},
  author={Lam, Henry},
  journal={Mathematics of Operations Research},
  volume={41},
  number={4},
  pages={1248--1275},
  year={2016},
  publisher={INFORMS}
}

@article{breuer2016measuring,
  title={Measuring distribution model risk},
  author={Breuer, Thomas and Csisz{\'a}r, Imre},
  journal={Mathematical Finance},
  volume={26},
  number={2},
  pages={395--411},
  year={2016},
  publisher={Wiley Online Library}
}

@article{jiang2018risk,
  title={Risk-averse two-stage stochastic program with distributional ambiguity},
  author={Jiang, Ruiwei and Guan, Yongpei},
  journal={Operations Research},
  volume={66},
  number={5},
  pages={1390--1405},
  year={2018},
  publisher={INFORMS}
}

@article{wang2016likelihood,
  title={Likelihood robust optimization for data-driven problems},
  author={Wang, Zizhuo and Glynn, Peter W and Ye, Yinyu},
  journal={Computational Management Science},
  volume={13},
  pages={241--261},
  year={2016},
  publisher={Springer}
}

@article{bertsimas2018data,
  title={Data-driven robust optimization},
  author={Bertsimas, Dimitris and Gupta, Vishal and Kallus, Nathan},
  journal={Mathematical Programming},
  volume={167},
  pages={235--292},
  year={2018},
  publisher={Springer}
}

@article{shafieezadeh2023new,
  title={Nash Equilibria, Regularization and Computation in Optimal Transport-Based Distributionally Robust Optimization},
  author={Shafiee, Soroosh and Aolaritei, Liviu and D{\"o}rfler, Florian and Kuhn, Daniel},
  journal={Operations Research},
  year={2025}
}

@article{ref:tacskesen2023semi,
  title={Semi-discrete optimal transport: {H}ardness, regularization and numerical solution},
  author={Ta{\c{s}}kesen, Bahar and Shafieezadeh-Abadeh, Soroosh and Kuhn, Daniel},
  journal={Mathematical Programming},
  volume={199},
  number={1-2},
  pages={1033--1106},
  year={2023},
  publisher={Springer}
}

@inproceedings{an2021generalization,
  title={Generalization bounds for ({W}asserstein) robust optimization},
  author={An, Yang and Gao, Rui},
  booktitle={Advances in Neural Information Processing Systems},
  year={2021}
}

@article{gao2022finite,
  title={Finite-sample guarantees for {W}asserstein distributionally robust optimization: Breaking the curse of dimensionality},
  author={Gao, Rui},
  journal={Operations Research},
  volume={71},
  number={6},
  pages={2291--2306},
  year={2023},
  publisher={INFORMS}
}

@article{blanchet2022confidence,
  title={Confidence regions in  {W}asserstein distributionally robust estimation},
  author={Blanchet, Jose and Murthy, Karthyek and Si, Nian},
  journal={Biometrika},
  volume={109},
  number={2},
  pages={295--315},
  year={2022},
  publisher={Oxford University Press}
}

@article{blanchet2019optimal,
  title={Optimal uncertainty size in distributionally robust inverse covariance estimation},
  author={Blanchet, Jose and Si, Nian},
  journal={Operations Research Letters},
  volume={47},
  number={6},
  pages={618--621},
  year={2019},
  publisher={Elsevier}
}

@article{wang2021sinkhorn,
  title={Sinkhorn distributionally robust optimization},
  author={Wang, Jie and Gao, Rui and Xie, Yao},
  journal={Operations Research},
  year={2025},
  publisher={INFORMS}
}

@article{ref:van2021data,
  title={From data to decisions: {D}istributionally robust optimization is optimal},
  author={Van Parys, Bart PG and Mohajerin Esfahani, Peyman and Kuhn, Daniel},
  journal={Management Science},
  volume={67},
  number={6},
  pages={3387--3402},
  year={2021}
}

@article{hansen2001robust,
  title={Robust control and model uncertainty},
  author={Hansen, Lars Peter and Sargent, Thomas J},
  journal={American Economic Review},
  volume={91},
  number={2},
  pages={60--66},
  year={2001},
  publisher={American Economic Association}
}

@article{azizian2023exact,
  title={Exact generalization guarantees for (regularized) {W}asserstein distributionally robust models},
  author={Azizian, Wa{\"\i}ss and Iutzeler, Franck and Malick, J{\'e}r{\^o}me},
  journal={Advances in Neural Information Processing Systems},
  year={2023}
}

@article{chen2020distributionally,
  title={Distributionally robust learning},
  author={Chen, Ruidi and Paschalidis, Ioannis Ch},
  journal={Foundations and Trends in Optimization},
  volume={4},
  number={1-2},
  pages={1--243},
  year={2020},
  publisher={Now Publishers, Inc.}
}

@article{chen2022data,
  title={Data-Driven Chance Constrained Programs over {W}asserstein Balls},
  author={Chen, Zhi and Kuhn, Daniel and Wiesemann, Wolfram},
  journal={Operations Research},
  volume={72},
  number={1},
  pages={410--424},
  year={2024},
  publisher={INFORMS}
}

@article{xie2021distributionally,
  title={On distributionally robust chance constrained programs with {W}asserstein distance},
  author={Xie, Weijun},
  journal={Mathematical Programming},
  volume={186},
  number={1-2},
  pages={115--155},
  year={2021},
  publisher={Springer}
}

@incollection{scarf1958min,
  booktitle={Studies in the Mathematical Theory of
Inventory and Production},
  title={A min-max solution of an inventory problem},
  author={Scarf, H},
  year={1958},
  editor={KJ Arrow and S Karlin and H Scarf},
  pages={201--209},
  publisher={Stanford University Press}
}

@article{yue2021linear,
  title={On linear optimization over {W}asserstein balls},
  author={Yue, Man-Chung and Kuhn, Daniel and Wiesemann, Wolfram},
  journal={Mathematical Programming},
  pages={1--16},
  year={2021},
  publisher={Springer}
}

@book{ref:villani2009optimal,
  title={Optimal Transport: {O}ld and New},
  author={Villani, C{\'e}dric},  year={2009},
  publisher={Springer}
}

@inproceedings{lee2018minimax,
  title={Minimax statistical learning with {W}asserstein distances},
  author={Lee, Jaeho and Raginsky, Maxim},
  booktitle={Advances in Neural Information Processing Systems},
  year={2018}
}

@misc{hu2013kullback,
  title={Kullback-{L}eibler divergence constrained distributionally robust optimization},
  author={Hu, Zhaolin and Hong, L Jeff},
  note={Available at Optimization Online},
  year={2013}
}

@inproceedings{namkoong2016stochastic,
  title={Stochastic gradient methods for distributionally robust optimization with $f$-divergences},
  author={Namkoong, Hongseok and Duchi, John C},
  booktitle={Advances in Neural Information Processing Systems},
  year={2016}
}

@article{duchi2021learning,
  title={Learning models with uniform performance via distributionally robust optimization},
  author={Duchi, John C and Namkoong, Hongseok},
  journal={The Annals of Statistics},
  volume={49},
  number={3},
  pages={1378--1406},
  year={2021},
  publisher={Institute of Mathematical Statistics}
}

@inproceedings{cranko2021generalised,
  title={Generalised {L}ipschitz regularisation equals distributional robustness},
  author={Cranko, Zac and Shi, Zhan and Zhang, Xinhua and Nock, Richard and Kornblith, Simon},
  booktitle={International Conference on Machine Learning},
  year={2021}
}

@inproceedings{shafieezadeh2015distributionally,
  title={Distributionally robust logistic regression},
  author={Shafieezadeh-Abadeh, Soroosh and Mohajerin Esfahani, Peyman and Kuhn, Daniel},
  booktitle={Advances in Neural Information Processing Systems},
  year={2015}
}

@article{shafieezadeh2019regularization,
  title={Regularization via mass transportation},
  author={Shafieezadeh-Abadeh, Soroosh and Kuhn, Daniel and Mohajerin Esfahani, Peyman},
  journal={Journal of Machine Learning Research},
  volume={20},
  number={103},
  pages={1--68},
  year={2019}
}

@inproceedings{namkoong2017variance,
  title={Variance-based regularization with convex objectives},
  author={Namkoong, Hongseok and Duchi, John C},
  booktitle={Advances in Neural Information Processing Systems},
  year={2017}
}

@article{chen2018robust,
  title={A robust learning approach for regression models based on distributionally robust optimization},
  author={Chen, Ruidi and Paschalidis, Ioannis C},
  journal={Journal of Machine Learning Research},
  volume={19},
  number={13},
  year={2018}
}

@inproceedings{sinha2018certifying,
  title={Certifying Some Distributional Robustness with Principled Adversarial Training},
  author={Sinha, Aman and Namkoong, Hongseok and Duchi, John},
  booktitle={International Conference on Learning Representations},
  year={2018}
}

@article{blanchet2021optimal,
  title={Optimal Transport-Based Distributionally Robust Optimization: Structural Properties and Iterative Schemes},
  author={Blanchet, Jose and Murthy, Karthyek and Zhang, Fan},
  journal={Mathematics of Operations Research},
  year={2022},
  volume={47},
  pages={1500–1529}
}

@article{zhang2022wasserstein,
  title={Distributionally robust {G}aussian process regression and {B}ayesian inverse problems},
  author={Zhang, Xuhui and Blanchet, Jose and Marzouk, Youssef and Nguyen, Viet Anh and Wang, Sven},
  journal={The Annals of Applied Probability},
  volume={35},
  number={3},
  pages={1489--1530},
  year={2025},
  publisher={Institute of Mathematical Statistics}
}

@article{ref:wozabal2012framework,
  title={A framework for optimization under ambiguity},
  author={Wozabal, David},
  journal={Annals of Operations Research},
  volume={193},
  number={1},
  pages={21--47},
  year={2012}
}

@incollection{ref:bayraksan2015data,
  title={Data-driven stochastic programming using phi-divergences},
  author={Bayraksan, G{\"u}zin and Love, David K},
  booktitle={The Operations Research Revolution},
  pages={1--19},
  publisher={INFORMS},
  year={2015}
}

@inproceedings{gulrajani2017improved,
  title={Improved training of {W}asserstein {GAN}s},
  author={Gulrajani, Ishaan and Ahmed, Faruk and Arjovsky, Martin and Dumoulin, Vincent and Courville, Aaron C},
  booktitle={Advances in Neural Information Processing Systems},
  year={2017}
}

@inproceedings{genevay2016stochastic,
  title={Stochastic optimization for large-scale optimal transport},
  author={Genevay, Aude and Cuturi, Marco and Peyr{\'e}, Gabriel and Bach, Francis},
  volume={29},
  booktitle={Advances in Neural Information Processing Systems},
  year={2016}
}

@inproceedings{cuturi2013sinkhorn,
  title={Sinkhorn distances: Lightspeed computation of optimal transport},
  author={Cuturi, Marco},
  booktitle={Advances in Neural Information Processing Systems},
  year={2013}
}

@inproceedings{arjovsky2017wasserstein,
  title={ {W}asserstein generative adversarial networks},
  author={Arjovsky, Martin and Chintala, Soumith and Bottou, L{\'e}on},
  booktitle={International Conference on Machine Learning},
  year={2017}
}

@article{peyre2019computational,
  title={Computational optimal transport: With applications to data science},
  author={Peyr{\'e}, Gabriel and Cuturi, Marco},
  journal={Foundations and Trends in Machine Learning},
  volume={11},
  number={5-6},
  pages={355--607},
  year={2019},
  publisher={Now Publishers, Inc.}
}

@article{he2021higher,
  title={Higher-order expansion and {B}artlett correctability of distributionally robust optimization},
  author={He, Shengyi and Lam, Henry},
  journal={Mathematics of Operations Research},
  year={2025},
  publisher={INFORMS}
}

@Book{Rockafellar1970,
  title     = {Convex Analysis.},
  publisher = {Princeton University Press},
  year      = {1970},
  author    = {R Tyrrell Rockafellar}
}

@misc{blanchet2023statistical,
  title={Statistical limit theorems in distributionally robust optimization},
  author={Blanchet, Jose and Shapiro, Alexander},
  eprint={2303.14867},
  archivePrefix={arXiv},  
year={2023}
}

@article{aolaritei2022performance,
  title={Wasserstein Distributionally Robust Estimation in High Dimensions: {P}erformance Analysis and Optimal Hyperparameter Tuning},
  author={Aolaritei, Liviu and Shafieezadeh-Abadeh, Soroosh and D{\"o}rfler, Florian},
  journal={Mathematical Programming},
year={2025}
}

@article{mohajerin2018data,
  title={Data-driven distributionally robust optimization using the  {W}asserstein metric: Performance guarantees and tractable reformulations},
  author={Mohajerin Esfahani, Peyman and Kuhn, Daniel},
  journal={Mathematical Programming},
  volume={171},
  number={1},
  pages={115--166},
  year={2018},
  publisher={Springer}
}

@article{gao2017wasserstein,
  title={ {W}asserstein distributionally robust optimization and variation regularization},
  author={Gao, Rui and Chen, Xi and Kleywegt, Anton J},
  journal={Operations Research},
  year={2022},
  pages={1--15},
  publisher={INFORMS}
}

@article{blanchet2019robust,
  title={Robust {W}asserstein profile inference and applications to machine learning},
  author={Blanchet, Jose and Kang, Yang and Murthy, Karthyek},
  journal={Journal of Applied Probability},
  volume={56},
  number={3},
  pages={830--857},
  year={2019},
  publisher={Cambridge University Press}
}

@incollection{shapiro2003monte,
title = {Monte {C}arlo Sampling Methods},
series = {Handbooks in Operations Research and Management Science},
publisher = {Elsevier},
volume = {10},
editor={A. Ruszczy\`{n}ski and A. Shapiro},
pages = {353--425},
year = {2003},
booktitle = {Stochastic Programming},
author = {Alexander Shapiro}
}

@incollection{shapiro2001duality,
  title={On duality theory of conic linear problems},
  author={Shapiro, Alexander},
  booktitle={Semi-Infinite Programming},
  editor={Goberna, Miguel \A'. and L\'{o}pez, Marco A.},
  pages={135--165},
  year={2001},
  publisher={Springer}
}

@incollection{kuhn2019wasserstein,
  title={ {W}asserstein distributionally robust optimization: {T}heory and applications in machine learning},
  author={Kuhn, Daniel and Mohajerin Esfahani, Peyman and Nguyen, Viet Anh and Shafieezadeh-Abadeh, Soroosh},
  booktitle={Operations Research \& Management Science in the Age of Analytics},
  pages={130--166},
  year={2019},
  publisher={INFORMS}
}

@article{rahimian2019distributionally,
     author = {Hamed Rahimian and Sanjay Mehrotra},
     title = {Frameworks and results in distributionally robust optimization},
     journal = {Open Journal of Mathematical Optimization},
     publisher = {Universit\'e de Montpellier},
     year = {2022}
}

@article{ref:blanchet2019quantifying,
  title={Quantifying distributional model risk via optimal transport},
  author={Blanchet, Jose and Murthy, Karthyek},
  journal={Mathematics of Operations Research},
  volume={44},
  number={2},
  pages={565--600},
  year={2019}
}

@article{ref:zhao2018data,
  title={Data-driven risk-averse stochastic optimization with {W}asserstein metric},
  author={Zhao, Chaoyue and Guan, Yongpei},
  journal={Operations Research Letters},
  volume={46},
  number={2},
  pages={262--267},
  year={2018}
}

@inproceedings{li2022tikhonov,
  title={Tikhonov Regularization is Optimal Transport Robust under Martingale Constraints},
  author={Li, Jiajin and Lin, Sirui and Blanchet, Jos{\'e} and Nguyen, Viet Anh},
  booktitle={Advances in Neural Information Processing Systems},
  year={2022}
}
\bibliographystyle{abbrvnat}
\end{document}